\documentclass[11pt,reqno]{amsart}
\usepackage{amsmath}
\usepackage{amsfonts}

\usepackage{amscd}
\usepackage{latexsym}
\usepackage{amsthm}
\usepackage{mathrsfs}
\usepackage{amssymb} \usepackage{latexsym}
\usepackage{eufrak}
\usepackage{euscript}
\usepackage{epsfig}
\usepackage{graphics}
\usepackage{array}
\usepackage{enumerate}
\usepackage{dsfont}
\usepackage{color}
\usepackage{wasysym}
\usepackage{hyperref}
\usepackage{pdfsync}
\usepackage{ifpdf}
\usepackage{float}

\newcommand{\bel}[1]{\begin{equation}\label{#1}}

\newcommand{\be}{\begin{equation}}

\newcommand{\ba}{\begin{eqnarray}}
\newcommand{\ea}{\end{eqnarray}}

\newcommand{\qe}{\end{equation}}
\newcommand{\R}{{\mathbb R}}

\newcommand{\y}{(\Omega)}

\newcommand{\OM}{{\overline{\Omega}}}

\newcommand{\Hmm}[1]{\leavevmode{\marginpar{\tiny%
$\hbox to 0mm{\hspace*{-0.5mm}$\leftarrow$\hss}%
\vcenter{\vrule depth 0.1mm height 0.1mm width \the\marginparwidth}%
\hbox to
0mm{\hss$\rightarrow$\hspace*{-0.5mm}}$\\\relax\raggedright #1}}}

\theoremstyle{theorem}
\newtheorem{thm}{Theorem}[section]
\newtheorem{prop}{Proposition}[section]
\theoremstyle{example}
\newtheorem{example}{Example}[section]
\theoremstyle{corollary}
\newtheorem{coro}{Corollary}[section]
\theoremstyle{lemma}
\newtheorem{lemma}{Lemma}[section]
\theoremstyle{definition}
\newtheorem{defi}{Definition}[section]
\theoremstyle{proof}

\theoremstyle{remark}

\begin{document}

\title{Neumann Cheeger constants on graphs}

\author{Bobo Hua and Yan Huang}
\email{bobohua@fudan.edu.cn}
\address{School of Mathematical Sciences, LMNS, Fudan University, Shanghai 200433,
China; Shanghai Center for Mathematical Sciences, Fudan University, Shanghai 200433, China}
\email{yanhuang0509@gmail.com}
\address{School of Mathematical Sciences, Fudan University, Shanghai 200433, China.}

\begin{abstract} For any subgraph of a graph, the Laplacian with Neumann boundary condition was introduced by Chung and Yau \cite{ChungYau1994}. In this paper, motivated by the Riemannian case, we introduce the Cheeger constants for Neumann problems and prove corresponding Cheeger estimates for first nontrivial eigenvalues.
\end{abstract}
\maketitle

\section{Introduction}
\par
Let $(M,g)$ be a closed Riemannian manifold, i.e. compact and without boundary. The Cheeger constant of $M$, a version of isoperimetric constants, is defined as
$$h_M=\inf_\Omega\frac{\mathrm{Area}(\partial \Omega)}{\min\{\mathrm{Vol}(\Omega),\mathrm{Vol}(\Omega^c)\}},$$
where the infimum is taken over all precompact open submanifolds $\Omega\subset M$ with Lipschitz boundary $\partial\Omega.$¡¡ Here we denote by $\Omega^c:=M\setminus\Omega$ the complement of $\Omega,$ by $\mathrm{Vol}(\cdot)$ and $\mathrm{Area}(\cdot)$ the Riemannian volume and area respectively. In \cite{Cheeger1970}, Cheeger discovered a close relation between the geometric quantity, $h_M$, and the analytic quantity, the first nontrivial eigenvalue of the Laplace-Beltrami operator on $M.$ It is nowadays called the Cheeger estimate for the first nontrivial eigenvalue of the Laplacian.


For a compact Riemannian manifold $M$ with boundary $\partial M$, one usually needs to impose boundary conditions, either Dirichlet or Neumann, on the Laplace-Beltrami operator so as to obtain self-adjoint operators. In the same spirit of Cheeger \cite{Cheeger1970}, different geometric quantities concerning the boundary effects, called Dirichlet or Neumann Cheeger constants, can be defined to control the first (or nontrivial) eigenvalues of corresponding Laplacians, see e.g. \cite[Chapter 9]{GeometryAnalysis2012}. In this paper, we focus on Neumann boundary problems. The Neumann Cheeger constant of $(M,\partial M)$ is defined as
$$h_{N}(M)=\inf_{\Omega}\frac{\mathrm{Area}(\partial\Omega\cap \mathrm{int}(M))}{\min\{\mathrm{Vol}(\Omega),\mathrm{Vol}(\Omega^c)\}},$$ where the infimum is taken over all precompact open submanifolds $\Omega\subset M$ with Lipschitz boundary. From the definition one can see the part of $\partial \Omega$ contained in $\partial M$ does not play a role in the definition.
With the same method in \cite{Cheeger1970}, one can get the Neumann Cheeger estimate for $M$, namely
$$\lambda_{1,N}(M)\geq\frac{1}{4}h_{N}^2(M),$$
where $\lambda_{1,N}(M)$ is the first nontrivial eigenvalue of the Neumann Laplacian operator on $M$, see \cite[p.259]{Chavel1984}.

Inspired by Riemannian geometry, Cheeger type estimates have been generalized to the discrete setting. We recall some basic definitions of graphs. Let $V$ be a finite set which serves as the set of vertices of a graph and $\mu:V\times V\to [0,\infty), (x,y)\mapsto \mu_{xy}=\mu_{yx}$ be a symmetric weight function. This induces a graph structure, denoted by the pair $G:=(V,\mu)$, with the set of vertices $V$ and the set of edges $E$ which is defined as $\{x,y\}\in E$ if and only if $\mu_{xy}>0,$ in symbols $x\sim y.$ Note that we do allow self-loops in the graph, i.e. $x\sim x$ if $\mu_{xx}>0.$ In fact, $\mu$ induces a measure on $E.$ One can define a degree measure on $V$ accordingly
$$m_x:=\sum_{y\in V}\mu_{xy},\quad x\in V,$$
and denote by $m(\Omega):=\sum_{x\in\Omega}m_x$ the measure of $\Omega\subset V.$

Given $\Omega_1,\Omega_2\subset V,$ we denote by $E(\Omega_1,\Omega_2):=\{\{x,y\}\in E\mid x\in \Omega_1,y\in \Omega_2\}$ the set of edges between $\Omega_1$ and $\Omega_2$. Corresponding to Riemannian case, the Cheeger constant of a finite graph $G$ was introduced by Dodziuk \cite{Dodziuk1984} and  Alon \cite{AlonMilman1985} independently:
\begin{equation}
\label{df:Cheeger const}h_G=\inf_{\emptyset\neq\Omega\subsetneqq V}\frac{\mu(\partial\Omega)}{\min\{m(\Omega),m(\Omega^c)\}},\end{equation}
where $\Omega^c:=V\setminus\Omega$, $\partial \Omega:=E(\Omega,\Omega^c)$ denotes the edge boundary of $\Omega.$ Moreover, the Cheeger estimate reads as
\begin{equation}\label{df:Cheeger estimate}
\frac{1}{2}h_G^2\leq 1-\sqrt{1-h_G^2}\leq\lambda_1(G)\leq 2h_G,\end{equation}
where $\lambda_1(G)$ is the first nontrivial eigenvalue of the Laplacian operator on $V$, see e.g. \cite[Lemma 2.1, Theorem 2.3]{Chung97}.

Neumann Laplace operater on any subgraph of a graph was introduced by Chung and Yau \cite{ChungYau1994}, which has been studied by many authors, see e.g.  \cite{ChungGrahamYau96,ChungYau97,Tan1999}. Given a subset $\Omega\subset V,$ we denote by $\delta\Omega$ the vertex boundary of $\Omega$ consisting of vertices in $\Omega^c$ that are adjacent to some vertices in $\Omega.$ We define a measure $m'$ on $\delta\Omega$ as $m'_z:=\sum_{y\in \Omega, y\sim z}\mu_{zy},\forall z\in\delta\Omega.$ In notation, we write $\overline{\Omega}:=\Omega\cup \delta\Omega$ and $E_{\Omega}:=E(\Omega,\overline{\Omega}).$ The Neumann Laplace operator appears naturally from the following variational problem
\begin{equation}\label{df:energy}D_{\Omega}(f):=\sum_{e=\{x,y\}\in E_{\Omega}}(f(x)-f(y))^2\mu_{xy},\quad f:\overline{\Omega}\to\R,\end{equation} under the constraint that $$\sum_{x\in \Omega}f(x)^2m_x=1.$$ The critical point $f$ and the critical value $\lambda$ of the above problem satisfy the Neumann eigenvalue problem
\begin{equation}\label{neumannlaplace}
\left\{
\begin{aligned}
&\Delta f(x):=\frac{1}{m_x}\sum_{y\in V: y\sim x}\mu_{xy}(f(y)-f(x))=-\lambda f(x),\quad \forall\ x\in \Omega, \\
&\sum_{y\in\Omega: y\sim z}\mu_{yz}(f(y)-f(z))=0,\quad\forall\ z\in\delta \Omega. \\
\end{aligned}
\right.
\end{equation}
The second equation in \eqref{neumannlaplace} justifies the Neumann boundary condition of the Laplacian in the discrete setting.

From a different perspective, Neumann Laplacians emerge from Markov processes, i.e. the simple random walks with reflections on the boundaries, observed by Chung and Yau \cite{ChungYau97}: Given a subset $\Omega\subset V,$ consider the simple random walk starting at vertices in $\Omega.$ Once the walker reaches the boundary $\delta\Omega,$ it is reflected via the edges, chosen with probability according to edge weights, back into vertices in $\Omega$. By considering the boundary effects, one can show that this reflection process is equivalent to the simple random walk on a graph $\widetilde{\Omega}:=(\Omega,\widetilde{\mu})$ without boundary where $\widetilde{\mu}$ is the modified edge weights given by $$\widetilde{\mu}_{xy}=\mu_{xy}+\sum_{z\in\delta\Omega}\frac{\mu_{xz}\mu_{zy}}{m'_z},\quad \forall x,y\in \Omega.$$
Note that in the modified graph $\widetilde{\Omega}$ the boundary effects produce many self-loops and bridges ($\widetilde{\mu}_{xy}>0,$ for $x \nsim y$ near the boundary). For example, let $\Omega=\{v_1,v_2,v_3\}$ be a subset of the standard lattice graph $\mathbb{Z}^2$ with unit edge weights. Then by the reflection process, the modified graph $(\Omega,\widetilde{\mu})$ is depicted in Figure \ref{figure1}  together with its modified edge weights. For convenience, all the numbers depicted in the Figures below denote edge weights.
\begin{figure}[!h]
\includegraphics[height=6cm,width=12cm]{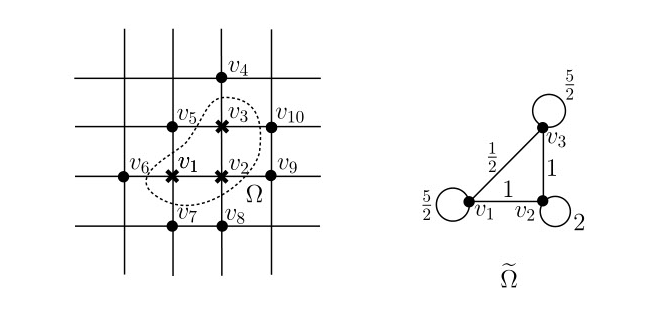}
\caption{}\label{figure1}
\end{figure}

By the above equivalence relation in \cite{ChungYau97}, the problems of Neumann Laplacians on subgraphs completely boil down to the problems of Laplacians on graphs without boundary conditions. They even share the same eigenvalues and eigenfunctions, see e.g. Lemma \ref{reducegraph} in this paper. Hence, by the Cheeger estimate \eqref{df:Cheeger estimate}, one has
\begin{equation}\label{eq:trivial estimate}
1-\sqrt{1-h_{\widetilde\Omega}^2}\leq\lambda_{1,N}(\Omega)\leq 2h_{\widetilde{\Omega}},\end{equation} where $\lambda_{1,N}(\Omega)$ is the first nontrivial eigenvalue of Neumann Laplacian on $\Omega$ and $h_{\widetilde{\Omega}}$ is the Cheeger constant of $\widetilde{\Omega}.$ As a Cheeger type estimate this result is already quite useful. However it involves a specific quantity $h_{\widetilde{\Omega}}$, derived from the modified graph $\tilde{\Omega}$ by random walk with reflection whose geometric meaning is not clear at the moment, has
no counterpart in the Riemannian setting. For our purpose, we would like
to obtain a Cheeger estimate via some geometric quantities obtained directly from the original data of the
graph, mimicking the Riemannian case.

Following the idea of Cheeger estimates for eigenvalues of Neumann Laplacians and treating the subset $\Omega\subset V$ as a manifold with boundary, we define the Neumann Cheeger constant of $\Omega$ as follows. Let $S\subset \overline{\Omega},$ we denote by $\partial_{\Omega}S:=\partial S\cap E(\Omega,\overline{\Omega})$ the relative boundary and $S^{\vee}:=\overline{\Omega}\setminus S$ the relative complement of $S$ in $\overline{\Omega}.$

\begin{defi}\label{neumanncheeger}
  The Neumann Cheeger constant of $\Omega$ in $(V,\mu)$ is defined as
  \begin{equation}\label{df:Neumann Cheeger}
    h_N(\Omega)=\inf_{S}\frac{\mu(\partial_\Omega S)}{\min\{m(S\cap \Omega),m(S^\vee\cap \Omega)\}},
  \end{equation} where the infimum is taken over all nonempty proper subsets $S$ of $\overline{\Omega}.$
\end{defi}
\noindent

We first show that $h_{N}(\Omega)$ is equal to a type of Sobolev constant, for the Riemannian case, see \cite[Theorem 9.6]{GeometryAnalysis2012}. Similar results for the discrete setting can be found in \cite[Theorem 2.6]{Chung97} and \cite[Lemma 5.14]{Chang2014}.

\begin{thm}\label{cheegerseqsobolev}
$$ h_N(\Omega)=\inf_{f}\frac{\sum_{e=\{x,y\}\in E_{\Omega}}|f(x)-f(y)|\mu_{xy}}{\inf_{c\in\R} \sum_{x\in \Omega}|f(x)-c|m_{x}},$$
where $f$ ranges over all real non-constant functions on $\overline{\Omega}$.
\end{thm}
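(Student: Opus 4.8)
The plan is to prove the two inequalities separately, as is standard for Cheeger-type identities, denoting the functional on the right by $I(f)$ and aiming for $\inf_f I(f)=h_N(\Omega)$.

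For the inequality $\inf_f I(f)\le h_N(\Omega)$, I would simply test $I$ on indicator functions. Given a nonempty proper subset $S\subset\overline\Omega$, set $f=\mathbf 1_S$. Then $|f(x)-f(y)|=1$ exactly when the edge $\{x,y\}$ crosses between $S$ and $S^\vee$, so after intersecting with $E_\Omega$ the numerator collapses to $\mu(\partial_\Omega S)$. For the denominator, writing $a=m(S\cap\Omega)$ and $b=m(S^\vee\cap\Omega)$, the function $c\mapsto\sum_{x\in\Omega}|f(x)-c|m_x=|1-c|\,a+|c|\,b$ is piecewise linear and minimized on $[0,1]$, giving $\inf_c(\cdots)=\min\{a,b\}$. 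Hence $I(\mathbf 1_S)=\mu(\partial_\Omega S)/\min\{a,b\}$, and taking the infimum over $S$ yields the claim.

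For the reverse inequality $\inf_f I(f)\ge h_N(\Omega)$, I would normalize the test function and then slice. Fix a non-constant $f$ and let $c^*$ be a weighted median of $f|_\Omega$, i.e. a minimizer of $c\mapsto\sum_{x\in\Omega}|f(x)-c|m_x$; replacing $f$ by $f-c^*$ changes neither numerator nor denominator and makes the inner infimum equal to $\sum_{x\in\Omega}|f(x)|m_x$, while forcing $m(\{x\in\Omega:f(x)>0\})\le\tfrac12 m(\Omega)$ and $m(\{x\in\Omega:f(x)<0\})\le\tfrac12 m(\Omega)$. Decomposing $f=f_+-f_-$ into positive and negative parts, the pointwise identity $|f(x)-f(y)|=|f_+(x)-f_+(y)|+|f_-(x)-f_-(y)|$ (checked separately in the same-sign and opposite-sign cases) splits the numerator, and the denominator splits as $\sum_{x\in\Omega}f_+(x)m_x+\sum_{x\in\Omega}f_-(x)m_x$. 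It therefore suffices to prove $\sum_{e}|g(x)-g(y)|\mu_{xy}\ge h_N(\Omega)\sum_{x\in\Omega}g(x)m_x$ for each nonnegative $g\in\{f_+,f_-\}$, whose superlevel sets meet $\Omega$ in measure at most $\tfrac12 m(\Omega)$.

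For such a $g$ I would apply the discrete co-area formula to $S_t=\{x\in\overline\Omega:g(x)>t\}$. Writing $g(x)=\int_0^\infty\mathbf 1_{S_t}(x)\,dt$ gives $\sum_{x\in\Omega}g(x)m_x=\int_0^\infty m(S_t\cap\Omega)\,dt$, and the same layer-cake applied to $|g(x)-g(y)|=\int_0^\infty|\mathbf 1_{S_t}(x)-\mathbf 1_{S_t}(y)|\,dt$ gives $\sum_{e}|g(x)-g(y)|\mu_{xy}=\int_0^\infty\mu(\partial_\Omega S_t)\,dt$. Since $m(S_t\cap\Omega)\le\tfrac12 m(\Omega)$ for every $t\ge 0$, the minimum in the definition of $h_N(\Omega)$ sits on the $S_t\cap\Omega$ side, so $\mu(\partial_\Omega S_t)\ge h_N(\Omega)\,m(S_t\cap\Omega)$ for a.e. $t$; integrating yields the displayed inequality for each of $f_+,f_-$, and adding the two contributions gives $\sum_{e}|f(x)-f(y)|\mu_{xy}\ge h_N(\Omega)\sum_{x\in\Omega}|f(x)|m_x=h_N(\Omega)\inf_{c}\sum_{x\in\Omega}|f(x)-c|m_x$. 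I expect the main obstacle to be the bookkeeping around the median: one must normalize so that the small side of the minimum defining $h_N(\Omega)$ lies consistently on the superlevel sets, which is exactly why $f$ is split into $f_+$ and $f_-$ rather than sliced directly. The co-area identity itself is routine, but care is needed to keep the relative boundary $\partial_\Omega S_t$ (edges in $E_\Omega$) aligned with the measure $m(S_t\cap\Omega)$ (vertices in $\Omega$ only), since $f$ also takes values on $\delta\Omega$.
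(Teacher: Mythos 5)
Your proposal is correct and takes essentially the same route as the paper: an indicator-type test function for the inequality $\inf_f I(f)\le h_N(\Omega)$, and weighted-median normalization plus discrete co-area slicing of level sets for the reverse inequality. The only cosmetic difference is that you split the normalized function into positive and negative parts and slice each over $[0,\infty)$, whereas the paper slices the single function $f-c$ over all of $\R$, using sublevel sets for $\sigma\le 0$ and superlevel sets for $\sigma>0$; the two bookkeeping schemes are equivalent.
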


Moreover, we prove that these two candidates of Cheeger constants, $h_{\widetilde{\Omega}}$ and $h_N(\Omega)$, are closely related to each other.
\begin{thm}\label{thm:main2}
  $h_{\widetilde\Omega}\leq h_{N}(\Omega)\leq 2h_{\widetilde\Omega}.$
\end{thm}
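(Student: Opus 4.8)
The plan is to reduce both inequalities to a vertex-by-vertex comparison on the boundary $\delta\Omega$, exploiting an elementary estimate relating the harmonic mean and the minimum of two nonnegative numbers. The starting point is the observation that the degree measure of the modified graph agrees with the original one on $\Omega$: for $x\in\Omega$,
\begin{equation*}
\widetilde m_x:=\sum_{y\in\Omega}\widetilde\mu_{xy}=\sum_{y\in\Omega}\mu_{xy}+\sum_{z\in\delta\Omega}\frac{\mu_{xz}}{m'_z}\sum_{y\in\Omega}\mu_{zy}=\sum_{y\in\overline\Omega}\mu_{xy}=m_x,
\end{equation*}
since every neighbour of $x\in\Omega$ lies in $\overline\Omega$. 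Consequently $\widetilde m(A)=m(A)$ for every $A\subset\Omega$, so the denominators of the two Cheeger quotients match once the relevant subsets are identified. Throughout, given $S\subset\overline\Omega$ I would write $A=S\cap\Omega$, $B=S^\vee\cap\Omega=\Omega\setminus A$, $P=S\cap\delta\Omega$, $Q=S^\vee\cap\delta\Omega$, and for $z\in\delta\Omega$ set $a_z=\sum_{x\in A}\mu_{xz}$ and $b_z=\sum_{y\in B}\mu_{zy}$, so that $a_z+b_z=m'_z$.

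For the lower bound $h_{\widetilde\Omega}\le h_N(\Omega)$, I would fix a nonempty proper $S\subset\overline\Omega$ with finite Cheeger quotient (so that $A$ is a nonempty proper subset of $\Omega$) and use $A$ itself as the test set for $h_{\widetilde\Omega}$. Sorting the edges of $\partial_\Omega S$ by their endpoints in $A,B,P,Q$ and discarding the edges internal to $\delta\Omega$ (which do not lie in $E_\Omega$) gives
\begin{equation*}
\mu(\partial_\Omega S)=\sum_{x\in A,\,y\in B}\mu_{xy}+\sum_{z\in Q}a_z+\sum_{z\in P}b_z,
\end{equation*}
while expanding the bridge weights gives $\widetilde\mu(\partial A)=\sum_{x\in A,\,y\in B}\mu_{xy}+\sum_{z\in\delta\Omega}\tfrac{a_zb_z}{a_z+b_z}$. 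Since $\tfrac{a_zb_z}{a_z+b_z}\le\min\{a_z,b_z\}$, the $z$-th bridge term is dominated by $a_z$ (if $z\in Q$) or by $b_z$ (if $z\in P$); summing over $z\in\delta\Omega$ yields $\widetilde\mu(\partial A)\le\mu(\partial_\Omega S)$, and the matching denominators give $h_{\widetilde\Omega}\le\mu(\partial_\Omega S)/\min\{m(A),m(B)\}$. Taking the infimum over $S$ completes this direction.

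For the upper bound $h_N(\Omega)\le 2h_{\widetilde\Omega}$, I would run the construction backwards: given a nonempty proper $A\subset\Omega$ I must extend it to some $S\subset\overline\Omega$ by distributing $\delta\Omega$ between $S$ and $S^\vee$. The freedom in choosing $P,Q$ is exactly what is needed: assigning each $z$ to $Q$ when $a_z\le b_z$ and to $P$ otherwise makes its contribution to $\mu(\partial_\Omega S)$ equal to $\min\{a_z,b_z\}$, so that $\mu(\partial_\Omega S)=\sum_{x\in A,\,y\in B}\mu_{xy}+\sum_{z\in\delta\Omega}\min\{a_z,b_z\}$. The factor $2$ then arises from two elementary bounds applied separately: $\sum_{x\in A,\,y\in B}\mu_{xy}\le 2\sum_{x\in A,\,y\in B}\mu_{xy}$ for the interior edges, and the harmonic-mean inequality $\min\{a_z,b_z\}\le\tfrac{2a_zb_z}{a_z+b_z}$ for the boundary terms. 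Summing yields $\mu(\partial_\Omega S)\le 2\widetilde\mu(\partial A)$, and since the denominators again coincide, taking the infimum over $A$ gives $h_N(\Omega)\le 2h_{\widetilde\Omega}$.

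The genuinely delicate points are organizational rather than conceptual: one must verify that edges with both endpoints in $\delta\Omega$ are correctly excluded from $E_\Omega$ (otherwise the identity for $\mu(\partial_\Omega S)$ fails), and one must handle the degenerate test sets — those $S$ with $A=\emptyset$ or $A=\Omega$, where the Neumann quotient is infinite and hence irrelevant to the infimum — so that the chosen $A$ is always admissible for $h_{\widetilde\Omega}$. I expect the main obstacle to be bookkeeping the edge count defining $\mu(\partial_\Omega S)$ cleanly; once the formulas for $\mu(\partial_\Omega S)$ and $\widetilde\mu(\partial A)$ are in hand, both inequalities collapse to the single scalar chain $\tfrac{a_zb_z}{a_z+b_z}\le\min\{a_z,b_z\}\le\tfrac{2a_zb_z}{a_z+b_z}$.
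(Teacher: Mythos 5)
Your proposal is correct and takes essentially the same route as the paper's proof: the same matching of denominators via $\widetilde m_x=m_x$ on $\Omega$, the same decomposition of $\mu(\partial_\Omega S)$ into interior edges plus per-vertex boundary contributions, and the same majority-rule assignment of each $z\in\delta\Omega$ to $S$ or $S^\vee$ for the upper bound. Your scalar chain $\tfrac{a_zb_z}{a_z+b_z}\le\min\{a_z,b_z\}\le\tfrac{2a_zb_z}{a_z+b_z}$ is just a compact repackaging of the inequalities the paper applies termwise, so the two arguments coincide in substance.
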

We give two examples to show the sharpness of both estimates. For the lower bound estimate, see Figure \ref{figure2}: Let $\Omega=\{v_1,v_2,v_3,v_4\}$, then we have $h_{\widetilde{\Omega}}=h_N(\Omega)=\frac{1}{201}$. For the upper bound estimate, we consider a path graph of length 2 as shown in Figure \ref{figure3}: Let $\Omega=\{v_1,v_3\},$ then $h_N(\Omega)=1$ and $h_{\widetilde{\Omega}}=\frac{1}{2}.$ Moreover, we characterise the conditions for the equalities in Theorem \ref{thm:main2} respectively, see
Proposition \ref{firstequality} and \ref{secondequality}.
\begin{figure}[!h]
\includegraphics[height=4cm,width=12cm]{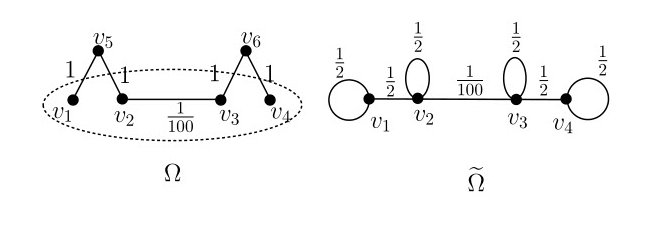}
\caption{}\label{figure2}
\end{figure}
\begin{figure}[!h]
\includegraphics[height=4cm,width=11cm]{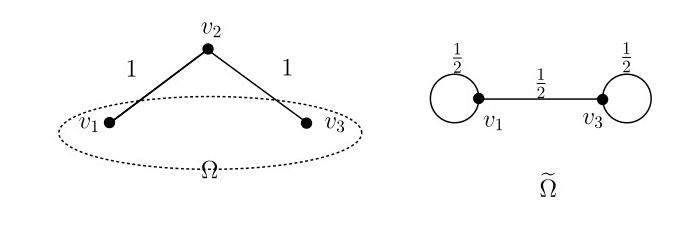}
\caption{}\label{figure3}
\end{figure}

Finally, we obtain the main result of the paper, the Neumann Cheeger estimate for the first nontrivial eigenvalue of Neumann Laplacian using $h_N(\Omega).$
\begin{thm}\label{NeumannCheegerestimates}
  \begin{equation}\label{eq:main estimate}2-\sqrt{4-h_N^2(\Omega)}\leq\lambda_{1,N}(\Omega)\leq2h_N(\Omega).\end{equation}
\end{thm}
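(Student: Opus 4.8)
The plan is to prove the two inequalities separately. The upper bound $\lambda_{1,N}(\Omega)\le 2h_N(\Omega)$ I would get by inserting the indicator of an optimal set into the variational characterization of $\lambda_{1,N}(\Omega)$, while the lower bound I would obtain by running Cheeger's coarea argument directly on $\overline{\Omega}$ with the relative boundary $\partial_\Omega$, the one genuinely new feature being a careful treatment of the boundary vertices $\delta\Omega$ through the Neumann condition.

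For the upper bound, recall the variational form
$$\lambda_{1,N}(\Omega)=\inf_{f}\frac{D_\Omega(f)}{\inf_{c\in\R}\sum_{x\in\Omega}(f(x)-c)^2 m_x}.$$
Since $\overline{\Omega}$ is finite, the infimum in \eqref{df:Neumann Cheeger} is attained by some $S$; assume $m(S\cap\Omega)\le m(S^\vee\cap\Omega)$ and set $a=m(S\cap\Omega)$, $b=m(S^\vee\cap\Omega)$. Taking $f=\mathds{1}_S$ I would compute $D_\Omega(\mathds{1}_S)=\mu(\partial_\Omega S)$ and $\inf_{c}\sum_{x\in\Omega}(\mathds{1}_S(x)-c)^2 m_x=\frac{ab}{a+b}$, so that
$$\lambda_{1,N}(\Omega)\le\frac{\mu(\partial_\Omega S)\,(a+b)}{ab}=h_N(\Omega)\,\frac{a+b}{b}\le 2h_N(\Omega),$$
using $a=\min\{a,b\}$ and $b\ge\frac{a+b}{2}$. (Alternatively this follows at once from $\lambda_{1,N}(\Omega)\le 2h_{\widetilde\Omega}$ in \eqref{eq:trivial estimate} together with $h_{\widetilde\Omega}\le h_N(\Omega)$ in Theorem \ref{thm:main2}.)

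For the lower bound, let $f$ be an eigenfunction for $\lambda:=\lambda_{1,N}(\Omega)$. As $\sum_{x\in\Omega}f(x)m_x=0$, after possibly replacing $f$ by $-f$ I may assume $g:=\max\{f,0\}$ satisfies $m(\{g>0\}\cap\Omega)\le\frac12 m(\Omega)$. The first step is the energy estimate
$$D_\Omega(g)\le\lambda\sum_{x\in\Omega}g(x)^2 m_x,\qquad(\star)$$
which I would deduce from the Green-type identity $D_\Omega(g)=\sum_{v\in\overline{\Omega}}g(v)(Lg)(v)$, where $(Lg)(v):=\sum_{w:\{v,w\}\in E_\Omega}\mu_{vw}(g(v)-g(w))$. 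On $\Omega$ the pointwise bound $(Lg)(x)\le(Lf)(x)=\lambda m_x f(x)$ (valid where $f(x)>0$, since $g\ge f$) controls the interior sum by $\lambda\sum_{x\in\Omega}g(x)^2 m_x$, while the Neumann condition $(Lf)(z)=0$ forces $(Lg)(z)\le 0$ for $z\in\delta\Omega$, so the boundary sum is nonpositive. Next, the coarea formula applied to $\phi=g^2$ over $E_\Omega$, together with Definition \ref{neumanncheeger} and the sign choice on $g$, gives
$$\sum_{\{x,y\}\in E_\Omega}|g(x)^2-g(y)^2|\mu_{xy}=\int_0^\infty\mu\bigl(\partial_\Omega\{\phi>t\}\bigr)\,dt\ge h_N(\Omega)\sum_{x\in\Omega}g(x)^2 m_x,$$
and factoring $g(x)^2-g(y)^2=(g(x)-g(y))(g(x)+g(y))$ and applying Cauchy--Schwarz bounds the left side by $D_\Omega(g)^{1/2}\bigl(\sum_{\{x,y\}\in E_\Omega}(g(x)+g(y))^2\mu_{xy}\bigr)^{1/2}$.

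The crux is to estimate $\sum_{\{x,y\}\in E_\Omega}(g(x)+g(y))^2\mu_{xy}$. Using $(g(x)+g(y))^2=2(g(x)^2+g(y)^2)-(g(x)-g(y))^2$ and summing, the diagonal terms produce $\sum_{x\in\Omega}g(x)^2 m_x$ \emph{plus} a boundary contribution $B:=\sum_{z\in\delta\Omega}g(z)^2 m'_z$, so that
$$\sum_{\{x,y\}\in E_\Omega}(g(x)+g(y))^2\mu_{xy}=2\Bigl(\sum_{x\in\Omega}g(x)^2 m_x+B\Bigr)-D_\Omega(g).$$
The main obstacle---and precisely the step that replaces the classical constant $2$ by $4$ and yields the sharp bound $2-\sqrt{4-h_N^2(\Omega)}$---is to show $B\le\sum_{x\in\Omega}g(x)^2 m_x$. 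For this I would use the Neumann condition to write $f(z)$ as the $\mu$-weighted average of $\{f(y):y\in\Omega,\ y\sim z\}$, deduce $g(z)\le\frac{1}{m'_z}\sum_{y\in\Omega,\,y\sim z}\mu_{zy}g(y)$, apply Jensen's inequality to get $g(z)^2 m'_z\le\sum_{y\in\Omega,\,y\sim z}\mu_{zy}g(y)^2$, and sum over $z$ using $\sum_{z\in\delta\Omega,\,z\sim y}\mu_{zy}\le m_y$. With this bound, writing $\alpha=D_\Omega(g)/\sum_{x\in\Omega}g(x)^2 m_x$, the whole chain collapses to $h_N(\Omega)\le\sqrt{\alpha(4-\alpha)}$, i.e. $\alpha^2-4\alpha+h_N^2(\Omega)\le0$, whence $\alpha\ge 2-\sqrt{4-h_N^2(\Omega)}$; combined with $(\star)$, which yields $\lambda\ge\alpha$, this finishes the proof.
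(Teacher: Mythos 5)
Your proposal is correct and takes essentially the same route as the paper's proof: the same cut-based test function in the Rayleigh quotient for the upper bound, and for the lower bound the identical scheme of restricting to the positive part of the eigenfunction, proving the energy inequality via the Neumann condition, applying Cauchy--Schwarz to $g(x)^2-g(y)^2$, invoking the coarea formula against $h_N(\Omega)$, and using the key boundary estimate $\sum_{z\in\delta\Omega}g(z)^2m'_z\le\sum_{x\in\Omega}g(x)^2m_x$ (Jensen plus the Neumann averaging property), which is exactly what turns the classical constant into the $4$ under the square root. The remaining differences are purely organizational: you phrase the energy estimate through Green's identity rather than by multiplying the eigenvalue equation by $u$ and summing, and you apply the coarea formula to $g^2$ directly instead of to the level sets $P_t=\{v>t\}$ with weight $2t\,dt$.
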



For the lower bound estimates in \eqref{eq:trivial estimate} and \eqref{eq:main estimate}, concerning Theorem \ref{thm:main2}, it is hard to tell which one is better. For example in Figure \ref{figure3}, our lower bound estimate \eqref{eq:main estimate} is better than \eqref{eq:trivial estimate} by a scalar factor $2.$ As vague intuition, the reason is that the part of boundary effects, self-loops in $\widetilde\Omega,$ play no role in the numerator of the definition of $h_{\widetilde\Omega}$, i.e. one cannot cut the self-loops by that definition. However one can measure the boundary more effectively in $h_N(\Omega).$ From this point of view, the geometric idea rather gives some new insights on this problem. In summary, one gets
\begin{equation}\label{spectralgaplowerbound}
\max\left\{2-\sqrt{4-h_N^2(\Omega)},1-\sqrt{1-h_{\widetilde{\Omega}}^2}\right\}\leq\lambda_{1,N}(\Omega)\leq 2h_{\widetilde{\Omega}};\end{equation} in particular, this yields $$\max\left\{\frac{1}{4}h_N^2(\Omega),\frac12h_{\widetilde{\Omega}}^2\right\}\leq\lambda_{1,N}(\Omega).$$

As an application, the Neumann Cheeger estimate can be used to estimate the spectral gap of Dirichlet Laplace operator. From \cite[Corollary 3.1]{ChungOden2000} we know that the spectral gap of Dirichlet Laplace operator can be bounded from below by $\lambda_{1,N}(\Omega)$, namely
$$\lambda_{2,D}(\Omega)-\lambda_{1,D}(\Omega)\geq\lambda_{1,N}(\Omega),$$
where $\lambda_{1,D}(\Omega)$ and $\lambda_{2,D}(\Omega)$ are the first and second eigenvalue of Dirichlet Laplace operator on $\Omega$. Hence combining with the lower bound estimate in \eqref{spectralgaplowerbound}, we have a lower bound estimate for the spectral gap of Dirichlet Laplace operator on $\Omega$ using Cheeger constants.

The organization of the paper is as follows: In section 2, we recall some basic facts about Neumann Laplacians on graphs. In section 3, we characterise the properties of Neumann Cheeger constants. In section 4, we give the proof of the main result, i.e. Theorem \ref{NeumannCheegerestimates}.


\section{preliminaries}

Let $G=(V,\mu)$ be a connected graph and $\Omega$ a finite subgraph of $G$. We study the Laplacians with Neumann boundary conditions on $\Omega.$ To avoid the triviality, we assume that $\Omega$ has at least two vertices and non-empty vertex boundary, i.e. $\delta\Omega\neq\emptyset$. We order the eigenvalues of the Neumann Laplacian on $\Omega$ in the non-decreasing way:
$$0=\lambda_{0,N}(\Omega)\leq \lambda_{1,N}(\Omega)\leq\cdots\leq\lambda_{K-1,N}(\Omega),$$
where the subscripts $N$ indicate the Neumann eigenvalues and $K$ denotes the number of vertices in $\Omega$. We call $\lambda_{1,N}(\Omega)$ the first non-trivial
eigenvalue of the Neumann Laplacian on $\Omega$ (although it could be zero in cases). We denote by $\R^{\overline{\Omega}}$ the set of all real functions on $\overline{\Omega}.$ Due to variational principle, $\lambda_{1,N}(\Omega)$ can be characterized by the Reighley quotient, see \cite[pp.125]{Chung97},
\begin{eqnarray}\label{Reighley quotient}
  \lambda_{1,N}(\Omega)&=&\inf\left\{\frac{D_{\Omega}(f)}{\sum_{x\in\Omega}f^2(x)m_x}: 0\not\equiv f\in \R^{\overline{\Omega}},\sum_{x\in \Omega}f(x)m_x=0\right\}\\
  &=&\inf\left\{\frac{D_{\Omega}(f)}{\inf_{c\in\R}\sum_{x\in\Omega}|f(x)-c|^2m_x}:\ \mathrm{nonconstant}\ f\in \R^{\overline{\Omega}}\right\},\nonumber\end{eqnarray}
where the Dirichlet energy of $f,$ $D_{\Omega}(f),$ is defined as in \eqref{df:energy}. For simplicity, from now on we denote by $\sum_{e=\{x,y\}\in E_{\Omega}}$ the summation of edges in $E_{\Omega}$ where each edge is only counted once.

As pointed out by Chung and Yau \cite{ChungYau97}, the eigenvalue problem of Neumann Laplacian on $\Omega$ is equivalent to that on the modified graph $\widetilde{\Omega}=(\Omega,\widetilde{\mu})$ without boundary conditions. We provide a proof here for completeness. We denote by $\widetilde{m}_x:=\sum_{y\sim x,y\in \Omega}\widetilde{\mu}_{xy}$ the degree of $x$ in $\widetilde{\Omega}$ and one can see that it coincides with the degree in the original graph $G$ by direct calculation.
\begin{prop}\label{prop:degree}
  $$\widetilde{m}_x=m_x,\quad \forall x\in\Omega.$$
\end{prop}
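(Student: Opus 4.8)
The plan is to evaluate $\widetilde{m}_x$ directly from the definition of the modified weights and to check that the boundary-reflection contributions exactly reassemble the part of $m_x$ coming from edges leaving $\Omega$. First I would write out
$$\widetilde{m}_x = \sum_{y\in\Omega}\widetilde{\mu}_{xy} = \sum_{y\in\Omega}\mu_{xy} + \sum_{y\in\Omega}\sum_{z\in\delta\Omega}\frac{\mu_{xz}\mu_{zy}}{m'_z},$$
splitting the degree into the contribution from the original edges inside $\Omega$ and the contribution from the reflection terms.

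The key step is to interchange the order of summation in the double sum and to use the definition of the boundary measure $m'_z=\sum_{y\in\Omega,\,y\sim z}\mu_{zy}=\sum_{y\in\Omega}\mu_{zy}$, the restriction $y\sim z$ being harmless since $\mu_{zy}=0$ otherwise. Pulling $\mu_{xz}/m'_z$ out of the inner sum gives
$$\sum_{z\in\delta\Omega}\frac{\mu_{xz}}{m'_z}\sum_{y\in\Omega}\mu_{zy} = \sum_{z\in\delta\Omega}\frac{\mu_{xz}}{m'_z}\,m'_z = \sum_{z\in\delta\Omega}\mu_{xz},$$
so the entire reflection contribution collapses to the total weight of the edges joining $x$ to the vertex boundary.

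Finally I would combine the two pieces to obtain $\widetilde{m}_x=\sum_{y\in\Omega}\mu_{xy}+\sum_{z\in\delta\Omega}\mu_{xz}$ and compare with $m_x=\sum_{y\in V}\mu_{xy}$. The one structural observation needed here is that, since $x\in\Omega$ and $\delta\Omega$ is by definition the set of all vertices of $\Omega^c$ adjacent to $\Omega$, every neighbour of $x$ lies in $\overline{\Omega}=\Omega\cup\delta\Omega$; hence $\mu_{xy}=0$ whenever $y\notin\overline{\Omega}$, and the two sums together exhaust all of $V$. This yields $\widetilde{m}_x=m_x$.

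I do not expect a genuine obstacle, as the statement is essentially a bookkeeping identity. The only points requiring a little care are the legitimacy of swapping the (finite) sums, the handling of a possible self-loop at $x$, which sits inside the $\sum_{y\in\Omega}\mu_{xy}$ term and is counted identically on both sides, and the invocation of the definition of $\delta\Omega$ to rule out neighbours of $x$ lying outside $\overline{\Omega}$.
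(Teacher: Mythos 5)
Your proposal is correct and follows essentially the same route as the paper's proof: split $\widetilde{m}_x$ into the original inner-edge contribution plus the reflection terms, interchange the double sum, and collapse it via the definition of $m'_z$ to recover $\sum_{z\in\delta\Omega}\mu_{xz}$. The only difference is that you explicitly justify the final identification with $m_x$ (all neighbours of $x\in\Omega$ lie in $\overline{\Omega}$), a step the paper leaves implicit.
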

\begin{proof}
  \begin{eqnarray*}
\widetilde{m}_x&=&\sum_{z\in\Omega}\mu_{zx}+\sum_{z\in\Omega}\sum_{y\in\delta\Omega}\frac{\mu_{xy}}{m'_y}\mu_{yz}\\
&=&\sum_{z\in\Omega}\mu_{zx}+\sum_{y\in\delta\Omega}\mu_{yx}\sum_{z\in\Omega}\frac{\mu_{yz}}{m'_y}\\
&=&m_x.\nonumber
\end{eqnarray*}
\end{proof}

\begin{lemma}\label{reducegraph}
If $(\lambda,u)$ is a pair of eigenvalue and eigenfunction of the Neumann Laplacian on $\Omega,$ i.e. satisfying \eqref{neumannlaplace} in the introduction, then $(\lambda,u)$ is an eigen-pair on $\widetilde{\Omega},$ i.e.
$$-\Delta u(x)=-\frac{1}{\widetilde{m}_x}\sum_{z\in\Omega}\widetilde\mu_{zx}(u(z)-u(x))=\lambda u(x),\quad x\in \Omega,$$ and visa versa.
\end{lemma}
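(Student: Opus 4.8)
The plan is to verify directly that the two eigenvalue equations are equivalent, with the Neumann boundary condition playing the pivotal role of transporting the boundary information into the reflection weights $\widetilde{\mu}$. For the forward direction, I would fix $x\in\Omega$ and expand the modified Laplacian using the definition of $\widetilde{\mu}_{zx}$, splitting
$$\sum_{z\in\Omega}\widetilde{\mu}_{zx}(u(z)-u(x))=\sum_{z\in\Omega}\mu_{zx}(u(z)-u(x))+\sum_{w\in\delta\Omega}\frac{\mu_{wx}}{m'_w}\sum_{z\in\Omega}\mu_{zw}(u(z)-u(x)).$$
The decisive manipulation is to write $u(z)-u(x)=(u(z)-u(w))+(u(w)-u(x))$ inside the double sum: the part carrying $u(z)-u(w)$ is annihilated exactly by the boundary condition $\sum_{z\in\Omega,\,z\sim w}\mu_{zw}(u(z)-u(w))=0$, while the part carrying $u(w)-u(x)$ collapses because $\sum_{z\in\Omega}\mu_{zw}=m'_w$. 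What survives recombines into $\sum_{z\in\Omega}\mu_{zx}(u(z)-u(x))+\sum_{w\in\delta\Omega}\mu_{wx}(u(w)-u(x))$, which, since every neighbour of $x$ lies in $\overline{\Omega}$, is precisely the full interior sum $\sum_{y\in V,\,y\sim x}\mu_{xy}(u(y)-u(x))=-\lambda u(x)m_x$. Dividing through by $\widetilde{m}_x=m_x$ (Proposition \ref{prop:degree}) gives the eigen-equation on $\widetilde{\Omega}$.

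For the converse the subtlety is that an eigenfunction on $\widetilde{\Omega}$ is defined only on $\Omega$, so I first need to extend it to $\delta\Omega$. The natural, and in fact forced, choice is the weighted harmonic extension
$$u(z):=\frac{1}{m'_z}\sum_{y\in\Omega,\,y\sim z}\mu_{yz}\,u(y),\qquad z\in\delta\Omega,$$
designed so that the boundary equation in \eqref{neumannlaplace} holds by construction (well-defined because each $z\in\delta\Omega$ is adjacent to $\Omega$, hence $m'_z>0$). Once the boundary condition is guaranteed, the algebraic identity from the forward direction runs in reverse: the same two cancellations turn the $\widetilde{\Omega}$-eigenvalue equation back into $\sum_{y\in V,\,y\sim x}\mu_{xy}(u(y)-u(x))=-\lambda u(x)m_x$, which is the interior Neumann equation.

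I expect the only real obstacle to be bookkeeping in the converse, namely checking that the expansion is genuinely reversible after the harmonic extension is substituted, rather than any analytic difficulty. No estimate or limiting argument is involved; the entire statement reduces to the single careful expansion above together with the observation that the Neumann boundary condition is exactly what makes the reflected weights $\widetilde{\mu}$ reproduce the interior equation on $\Omega$.
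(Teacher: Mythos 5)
Your proof is correct and follows essentially the same route as the paper: the same expansion of $\widetilde{\mu}_{zx}$, with the Neumann boundary condition killing one part of the double sum and $\sum_{z\in\Omega}\mu_{zw}=m'_w$ collapsing the other, followed by division by $\widetilde{m}_x=m_x$ via Proposition~\ref{prop:degree}. The only difference is presentational: you make explicit the harmonic extension of $u$ to $\delta\Omega$ needed for the converse, which the paper leaves implicit in ``and vice versa.''
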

\begin{proof}
For any $x\in\Omega$:
\begin{eqnarray*}
  \Delta u(x)&=&\frac{1}{m_x}\sum_{y\in\overline\Omega}\mu_{yz}(u(y)-u(x))\\
  &=&\frac{1}{m_x}\sum_{z\in\Omega}\mu_{zx}(u(z)-u(x))+\frac{1}{m_x}\sum_{y\in\delta\Omega}\mu_{yx}(u(y)-u(x)).\\
  \end{eqnarray*}
By the Neumann boundary condition, for $y\in\delta\Omega$, one checks that $u(y)=\frac{1}{m'_y}\Sigma_{z\in\Omega}\mu_{yz}u(z)$. Plugging this into the previous equation, we have
\begin{eqnarray*}
\Delta u(x)&=&\frac{1}{m_x}\sum_{z\in\Omega}\mu_{zx}(u(z)-u(x))+\frac{1}{m_x}\sum_{z\in\Omega}\sum_{y\in\delta\Omega}\frac{\mu_{yx}}{m'_y}\mu_{yz}(u(z)-u(x))\\
&=&\frac{1}{m_x}\sum_{z\in\Omega}\left( \mu_{zx}+\sum_{y\in\delta\Omega}\frac{\mu_{yx}}{m'_y}\mu_{yz}\right)(u(z)-u(x))\\
&=&\frac{1}{\widetilde{m}_x}\sum_{z\in\Omega}\widetilde\mu_{zx}(u(z)-u(x)),
\end{eqnarray*} where we have used the definition of $\widetilde{\mu}$ and Proposition~\ref{prop:degree}. The lemma follows from calculation.
\end{proof}

\section{Neumann Cheeger constants}
In this section, we prove some useful properties of the Neumann Cheeger constant $h_N(\Omega),$ i.e. Theorem~\ref{cheegerseqsobolev} and Theorem~\ref{thm:main2}.

To simplify the notation, for any function $f\in\R^{\overline{\Omega}}$ and any constant $a\in\R,$ we denote by $\{f>a\}:=\{x\in\overline{\Omega}:f(x)>a\}$ the super-level set of $f$ in $\overline{\Omega}$, and by $\{f\geq a\},\{f<a\}$ and $\{f\leq a\}$ in the same way.

\begin{proof}[Proof of Theorem \ref{cheegerseqsobolev}]
For any nonconstant function $f\in\R^{\overline{\Omega}},$ we choose a constant $c\in\R$ such that
$$m(\{f<c\}\cap\Omega)\leq m(\{f\geq c\}\cap\Omega)$$
and
$$m(\{f\leq c\}\cap\Omega)\geq m(\{f> c\}\cap\Omega).$$
Set $g:=f-c $, then we have for any $\sigma \leq 0$,
$$m(\{g<\sigma\}\cap\Omega)\leq m(\{g\geq \sigma\}\cap\Omega)$$
and for any $\sigma >0$,
$$m(\{g<\sigma\}\cap\Omega)\geq m(\{g\geq \sigma\}\cap\Omega).$$
For any $\sigma\in\R$, we define
$$ G(\sigma):=\sum_{\substack{e=\{x,y\}\in E_{\Omega}\\ g(x)< \sigma \leq g(y)}}\mu_{xy}.$$

We claim that
$$\sum_{e=\{x,y\}\in E_{\Omega}}|f(x)-f(y)|\mu_{xy}=\int_{-\infty}^{\infty}G(\sigma)d\sigma,$$ which is a discrete version of the Coarea formula. For discrete Coarea formula, see e.g. \cite[Lemma 3.3]{Grigor'yan2011}.
By calculation,
\begin{eqnarray}
  \int_{-\infty}^{\infty}G(\sigma)d\sigma&=&\int_{-\infty}^{\infty}\sum_{\substack{e=\{x,y\}\in E_{\Omega},\\g(x)< \sigma \leq g(y)}}\mu_{xy}d\sigma=\int_{-\infty}^{\infty}\sum_{e=\{x,y\}\in E_{\Omega}}\mu_{xy}\cdot \chi_{(g(x),g(y)]}(\sigma)d\sigma\nonumber\\
  &=&\sum_{e=\{x,y\}\in E_{\Omega}}\mu_{xy}\int_{-\infty}^{\infty}\chi_{(g(x),g(y)]}(\sigma)d\sigma=\sum_{e=\{x,y\}\in  E_{\Omega}}|g(x)-g(y)|\mu_{xy}\nonumber\\
  &=&\sum_{e=\{x,y\}\in  E_{\Omega}}|f(x)-f(y)|\mu_{xy},\nonumber
\end{eqnarray}
where $\chi_{(g(x),g(y)]}(\cdot)$ is the characteristic function of the interval $(g(x),g(y)]$ in $\R.$ This proves the claim.
\par
We set $S:=\{g<\sigma\}$ for $\sigma\leq 0$ and $S:=\{g\geq \sigma\}$ for $\sigma> 0.$ In either case, $m(S\cap\Omega)\leq m(S^{\vee}\cap\Omega)$ and hence by the definition of $h_N(\Omega),$
$$G(\sigma)\geq h_N(\Omega)\cdot m(S\cap\Omega)=h_N(\Omega)\cdot\left\{
\begin{array}{ll}
m(\{g<\sigma\}\cap \Omega),& \mathrm{for}\ \sigma\leq 0, \\
m(\{g\geq\sigma\}\cap \Omega),& \mathrm{for}\ \sigma>0. \\
\end{array}\right.$$
\newline
So the previous claim yields
\begin{eqnarray}
&&\sum_{e=\{x,y\}\in E_{\Omega}}|f(x)-f(y)|\mu_{xy}=\int_{-\infty}^{0}G(\sigma)d\sigma+\int_{0}^{\infty}G(\sigma)d\sigma\nonumber\\
&\geq&h_N(\Omega)\left(\int_{-\infty}^{0}m(\{g<\sigma\}\cap \Omega)d\sigma+\int_{0}^{\infty}m(\{g\geq\sigma\}\cap \Omega)d\sigma\right)\nonumber\\
&=&h_N(\Omega)\left(\int_{-\infty}^{0}\sum_{x\in\Omega}\chi_{(g(x),0]}(\sigma)m_xd\sigma+
\int_{0}^{\infty}\sum_{x\in\Omega}\chi_{(0,g(x)]}(\sigma)m_xd\sigma\right)\nonumber\\
&=&h_N(\Omega)\sum_{x\in\Omega}|f(x)-c|m_x\geq h_N(\Omega)\inf_{c\in \R}\sum_{x\in\Omega}|f(x)-c|m_x,\nonumber
\end{eqnarray} where we have interchanged the summation and the integration in the last line. Taking the infimum over all nonconstant functions $f,$ we prove that
$$\inf_f\frac{\sum_{e=\{x,y\}\in E_{\Omega}}|f(x)-f(y)|\mu_{xy}}{\inf_{c\in\R} \sum_{x\in \Omega}|f(x)-c|m_{x}}\geq h_N(\Omega).$$

For the opposite direction, let $S\subset\overline\Omega$ attain the infimum in the definition of $h_{N}(\Omega),$ i.e.
$$m(S\cap\Omega)\leq m(S^{\vee} \cap \Omega),\quad h_N(\Omega)=\frac{\mu(\partial_\Omega S)}{m(S\cap\Omega)}.$$
Consider a function $\varphi$ on $\overline{\Omega}$ given by
\[
\varphi(x)=\left\{
       \begin{array}{ll}
        1,& x\in S, \\
        -1,&x\in S^{\vee}.
               \end{array}
     \right.
\]
Then \begin{eqnarray}
&&\frac{\sum_{e=\{x,y\}\in E_{\Omega}}|\varphi(x)-\varphi(y)|\mu_{xy}}{\inf_{c\in\R}\sum_{x\in\Omega}|\varphi(x)-c|m_x}\nonumber\\
&=&\sup_{c\in\R}\frac{2\mu(\partial_{\Omega}S)}{|1-c|m(S\cap\Omega)+|1+c|m(S^\vee\cap\Omega)}\nonumber\\
&=&\frac{2\mu(\partial_{\Omega}S)}{2m(S\cap\Omega)}=h_N(\Omega).\nonumber
\end{eqnarray}
Since $m(S\cap\Omega)\leq m(S^\vee \cap \Omega)$, the supremum for $c$ in the above attains at $c=-1$. This proves the other direction and hence the theorem.
\end{proof}

Recall that $\widetilde{\Omega}=(\Omega,\widetilde{\mu})$ is the modified graph of $\Omega$.
We denote by $\widetilde{E}$ the set of edges in $\widetilde{\Omega}$ induced by $\widetilde{\mu},$ i.e. $\{x,y\}\in \widetilde{E}$ if and only if $\widetilde{\mu}_{xy}>0,$ as usual. Given $K\subset\Omega,$ we denote by $\partial_{\widetilde\Omega}K$ the boundary of $K$ in $\widetilde\Omega$ and by $\widetilde{\mu}(\partial_{\widetilde\Omega}K)$ its measure. For convenience, for any nonempty proper subset $S$ of $\overline{\Omega},$ we simply write $\partial_{\widetilde\Omega}S:=\partial_{\widetilde\Omega}(S\cap \Omega)$ if it doesn't make any confusion. In addition, by Proposition \ref{prop:degree}, $\widetilde{m}$ coincides with $m$ on $\Omega.$

Now we are ready to prove Theorem~\ref{thm:main2}.
\begin{proof}[Proof of Theorem~\ref{thm:main2}]
For the lower bound estimate in the theorem, it suffices to show that for any nonempty proper subset $S$ in $\OM,$ $$\frac{\widetilde{\mu}(\partial_{\widetilde\Omega}S)}{\min\{m(S\cap\Omega),m( S^\vee\cap\Omega)\}}\leq\frac{\mu(\partial_{\Omega}S)}{\min\{m(S\cap\Omega),m( S^\vee\cap\Omega)\}},$$
i.e.
$\widetilde{\mu}(\partial_{\widetilde\Omega}S)\leq \mu(\partial_{\Omega}S).$
By the calculation,
\begin{eqnarray*}
&&\widetilde{\mu}(\partial_{\widetilde\Omega}S)=\sum_{\substack{x\in S\cap\Omega\\y\in S^\vee\cap\Omega}}\left(\mu_{xy}+\sum_{z\in\delta\Omega}\frac{\mu_{xz}\mu_{zy}}{m'_z}\right)\\
&=&\sum_{\substack{x\in S\cap\Omega\\y\in S^\vee\cap\Omega}}\mu_{xy}+\sum_{\substack{x\in S\cap\Omega\\y\in S^\vee\cap\Omega}}\left(\sum_{z\in S\cap\delta\Omega}+\sum_{z\in S^\vee\cap\delta\Omega}\right)\frac{\mu_{xz}\mu_{zy}}{m'_z}\\
&=&\sum_{\substack{x\in S\cap\Omega\\y\in S^\vee\cap\Omega}}\mu_{xy}+\sum_{\substack{z\in S\cap\delta\Omega\\y\in S^\vee\cap\Omega}}\left(\sum_{x\in S\cap\Omega}\frac{\mu_{xz}}{m'_z}\right)\mu_{zy}+\sum_{\substack{x\in S\cap\Omega\\z\in S^\vee\cap\delta\Omega}}\left(\sum_{y\in S^\vee\cap\Omega}\frac{\mu_{zy}}{m'_z}\right)\mu_{xz}\\
&\leq&\sum_{\substack{x\in S\cap\Omega\\y\in S^\vee\cap\Omega}}\mu_{xy}+\sum_{\substack{z\in S\cap\delta\Omega\\y\in S^\vee\cap\Omega}}\mu_{zy}+\sum_{\substack{x\in S\cap\Omega\\z\in S^\vee\cap\delta\Omega}}\mu_{xz}=\sum_{x\in S}\sum_{y\in S^\vee}\mu_{xy}=\mu(\partial_{\Omega}S).
\end{eqnarray*}
From this we prove the lower bound.

To show $h_{N}(\Omega)\leq 2h_{\widetilde\Omega},$ we assume $A$ is the subset of $\Omega$ that achieves $h_{\widetilde\Omega}$ and $B=\Omega\setminus A$. For simplicity, if $z\in\delta\Omega$, we set $m_A(z)=\sum_{x\in A}\mu_{xz}$ and $m_B(z)$ similarly. Let $K=\{z\in\delta\Omega|m_A(z)\neq0, m_B(z)\neq0\}$, set $S=A\cup\{z\in\delta\Omega|m_B(z)=0\}\cup\{z\in K|m_A(z)> m_B(z)\}\cup C$, where $C$ is any subset of $\{z\in K:m_A(z)=m_B(z)\}$, then $S^\vee=\overline{\Omega}\setminus S$. Hence $\overline\Omega$ is divided into two parts $S$ and $S^\vee$.

By calculation,
\begin{eqnarray*}
&&\mu(\partial_{\Omega}S)=\sum_{\{x,y\}\in E(A,B)}\mu_{xy}+\sum_{z\in K\cap S^\vee}m_A(z)+\sum_{z\in K\cap S}m_B(z)\\
&\leq&2\sum_{\{x,y\}\in E(A,B)}\mu_{xy}+2\sum_{z\in K\cap S^\vee}\frac{m_A(z)\cdot m_B(z)}{m'_z}+2\sum_{z\in K\cap S}\frac{m_A(z)\cdot m_B(z)}{m'_z}\\
&=&2\sum_{x\in A}\sum_{y\in B}\mu_{xy}+2\sum_{x\in A}\sum_{y\in B}\sum_{z\in K\cap S^\vee}\frac{\mu_{xz}\mu_{zy}}{m'_z}+2\sum_{x\in A}\sum_{y\in B}\sum_{z\in K\cap S}\frac{\mu_{xz}\mu_{zy}}{m'_z}\\
&=&2\sum_{x\in A}\sum_{y\in B}\left(\mu_{xy}+\sum_{z\in K}\frac{\mu_{xz}\mu_{zy}}{m'_z}\right)=2\sum_{x\in A}\sum_{y\in B}\widetilde\mu_{xy}
\end{eqnarray*}
Hence
$$h_{N}(\Omega)\leq\frac{\mu(\partial_{\Omega}S)}{\min\{m(S\cap\Omega),m( S^\vee\cap\Omega)\}}\leq 2\frac{\sum_{x\in A}\sum_{y\in B}\widetilde\mu_{xy}}{\min\{m(A),m(B)\}}=2h_{\widetilde\Omega}.$$
\end{proof}

Now, we discuss the conditions for the equalities in Theorem~\ref{thm:main2}. First, we introduce some notations. We denote a partition of $\Omega$ (resp. $\overline{\Omega}$) by $P_A=\{A,B\}$ (resp. $\overline{P}_S=\{S,S^{\vee}\}$), where $A\sqcup B=\Omega$ (resp. $S\sqcup S^{\vee}=\overline{\Omega}$), here $\sqcup$ denotes disjoint union. For convenience, we introduce notations
\begin{equation}\label{inducedpartition}
\overline{P}_S\cap\Omega:=\{S\cap\Omega,S^{\vee}\cap\Omega\}=P_{S\cap\Omega},
\end{equation}
which is a partition of $\Omega$ induced by $\overline{P}_S$.
The set of partitions of $\Omega$ (resp. $\overline\Omega$) that achieves $h_{\widetilde{\Omega}}$ (resp. $h_N(\Omega)$) is denoted by $\mathscr{P}$ (resp. $\overline{\mathscr{P}}$).

\begin{figure}[!h]
\includegraphics[height=5cm,width=14cm]{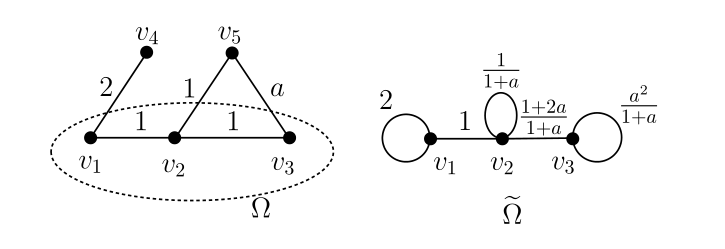}
\caption{}\label{figure6}
\end{figure}
A natural question is that what's the relation between $\mathscr{P}$ and $\overline{\mathscr{P}}\cap\Omega$, where $\overline{\mathscr{P}}\cap\Omega:=\{\overline{P}_S\cap\Omega\mid\overline{P}_S\in\overline{\mathscr{P}}\}$ and $\overline{P}_S\cap\Omega$ is defined in \eqref{inducedpartition}. The following example shows that $\mathscr{P}$ and $\overline{\mathscr{P}}\cap\Omega$ are different in general.

\begin{example}
If we set $a=\frac{9}{2}$ (the weight of edge $\{v_3,v_5\}$) in the graph as shown in Figure \ref{figure6}. By calculation, we have $h_N(\Omega)=\frac{1}{3}$ and $\overline{\mathscr{P}}=\{\{\{v_1,v_4\},\{v_2,v_3,v_5\}\}\}$, $h_{\widetilde{\Omega}}=\frac{40}{121}$ and $\mathscr{P}=\{\{\{v_1,v_2\},\{v_3\}\}\}$. Hence $\overline{\mathscr{P}}\cap\Omega\nsubseteq\mathscr{P}$ and $\mathscr{P}\nsubseteq\overline{\mathscr{P}}\cap\Omega$.
\end{example}

Here we rewrite the definitions of $h_N(\Omega)$ and $h_{\widetilde{\Omega}}$ by partitions of $\overline{\Omega}$ and $\Omega$ respectively. Set $$\eta(\overline{P}_S):=\frac{\mu(\partial_{\Omega}S)}{\min\{m(S\cap\Omega),m(S^{\vee}\cap\Omega)\}},$$
$$\zeta(P_A):=\frac{\widetilde{\mu}(\partial_{\widetilde{\Omega}}A)}{\min\{m(A),m(B)\}},$$
then
$$h_N(\Omega)=\inf_{\overline{P}_S}\eta(\overline{P}_S),$$
$$h_{\widetilde{\Omega}}=\inf_{P_A}\zeta(P_A).$$
To characterise the equalities in Theorem \ref{thm:main2}, we divide it into two cases.

Case 1: $h_{\widetilde{\Omega}}\leq h_{N}(\Omega).$

For any $\emptyset\neq S\subset\overline{\Omega},$ set $A=S\cap\Omega$ and $B=S^{\vee}\cap\Omega.$ From the proof of Theorem~\ref{thm:main2}, we have $\widetilde{\mu}(\partial_{\widetilde{\Omega}}S)=\mu(\partial_{\Omega}S)$ $\Longleftrightarrow$ $m_A(z)=0$, $\forall z\in S^{\vee}\cap\delta\Omega$ and  $ m_B(z)=0$, $\forall z\in S\cap\delta\Omega.$
\begin{prop}\label{firstequality}
$h_{\widetilde{\Omega}}= h_{N}(\Omega)$ if and only if the following two properties hold

(1) $\mathscr{P}\cap(\overline{\mathscr{P}}\cap\Omega)\neq\emptyset$;

(2) For any $ P_{A}=\{A,B\}\in \mathscr{P}\cap(\overline{\mathscr{P}}\cap\Omega)$, $m_A(z)=0$ or $m_B(z)=0,$ $\forall z\in\delta\Omega.$
\end{prop}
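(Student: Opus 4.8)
The plan is to exploit two facts already in hand: the universal inequality $h_{\widetilde\Omega}\le h_N(\Omega)$ from Theorem~\ref{thm:main2}, together with its pointwise source $\widetilde\mu(\partial_{\widetilde\Omega}S)\le\mu(\partial_\Omega S)$ and the equality characterisation recorded just above, namely that $\widetilde\mu(\partial_{\widetilde\Omega}S)=\mu(\partial_\Omega S)$ holds if and only if $m_A(z)=0$ for all $z\in S^\vee\cap\delta\Omega$ and $m_B(z)=0$ for all $z\in S\cap\delta\Omega$, where $A=S\cap\Omega$ and $B=S^\vee\cap\Omega$. Since $\overline\Omega$ is finite, both infima are attained, so $\mathscr P$ and $\overline{\mathscr P}$ are nonempty; moreover any minimiser must have both blocks of positive measure, since otherwise its denominator vanishes and the defining quotient is infinite. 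In particular every element of $\overline{\mathscr P}$ restricts to a genuine two-block partition of $\Omega$, so $\overline{\mathscr P}\cap\Omega$ is well defined.

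For the forward direction I would assume $h_{\widetilde\Omega}=h_N(\Omega)$ and first show that restriction sends minimisers to minimisers. Given $\overline P_S\in\overline{\mathscr P}$, set $A=S\cap\Omega$, $B=S^\vee\cap\Omega$; then
\[
\zeta(P_A)=\frac{\widetilde\mu(\partial_{\widetilde\Omega}A)}{\min\{m(A),m(B)\}}\le\frac{\mu(\partial_\Omega S)}{\min\{m(A),m(B)\}}=\eta(\overline P_S)=h_N(\Omega)=h_{\widetilde\Omega},
\]
while $\zeta(P_A)\ge h_{\widetilde\Omega}$ by definition, forcing $\zeta(P_A)=h_{\widetilde\Omega}$, i.e.\ $P_A\in\mathscr P$. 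As $P_A=\overline P_S\cap\Omega\in\overline{\mathscr P}\cap\Omega$, property~(1) follows. For~(2), take any $P_A=\{A,B\}\in\mathscr P\cap(\overline{\mathscr P}\cap\Omega)$ and pick $\overline P_S\in\overline{\mathscr P}$ realising it, say $S\cap\Omega=A$; then $\eta(\overline P_S)=\zeta(P_A)$ with identical denominators forces $\widetilde\mu(\partial_{\widetilde\Omega}A)=\mu(\partial_\Omega S)$, and the quoted characterisation yields that for every $z\in\delta\Omega$ one has $m_A(z)=0$ or $m_B(z)=0$.

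For the converse I would assume (1) and (2). By~(1), fix $P_A=\{A,B\}\in\mathscr P\cap(\overline{\mathscr P}\cap\Omega)$, so $\zeta(P_A)=h_{\widetilde\Omega}$. Since each $z\in\delta\Omega$ satisfies $m'_z=m_A(z)+m_B(z)>0$, property~(2) says exactly one of $m_A(z),m_B(z)$ vanishes, and I would define $S:=A\cup\{z\in\delta\Omega: m_B(z)=0\}$, whence $S^\vee=B\cup\{z\in\delta\Omega: m_A(z)=0\}$. This $S$ is a nonempty proper subset of $\overline\Omega$ with $S\cap\Omega=A$, and by construction it satisfies the side-specific equality condition, so $\mu(\partial_\Omega S)=\widetilde\mu(\partial_{\widetilde\Omega}A)$ and therefore $\eta(\overline P_S)=\zeta(P_A)=h_{\widetilde\Omega}$. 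Since $\eta(\overline P_S)\ge h_N(\Omega)$, this gives $h_{\widetilde\Omega}\ge h_N(\Omega)$, which combined with Theorem~\ref{thm:main2} yields equality.

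The main obstacle is the gap between the symmetric hypothesis in~(2), that $m_A(z)=0$ \emph{or} $m_B(z)=0$, and the side-specific equality characterisation, which prescribes \emph{which} of the two must vanish according to whether $z\in S$ or $z\in S^\vee$. The step that resolves this is the explicit boundary assignment in the converse: placing each $z$ on the side towards which all of its $\Omega$-edges point. The fact that $m'_z>0$ makes the assignment unambiguous (the two quantities are never simultaneously zero), while~(2) guarantees at least one of them vanishes; together these produce an $S$ that saturates the inequality $\widetilde\mu(\partial_{\widetilde\Omega}A)\le\mu(\partial_\Omega S)$. A secondary point to watch is the labelling ambiguity $\{S\cap\Omega,S^\vee\cap\Omega\}=\{A,B\}$ and the verification that both blocks are nonempty, which is exactly what the finiteness and attainment remarks at the outset are meant to handle.
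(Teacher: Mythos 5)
Your proof is correct and follows essentially the same route as the paper: for the ``only if'' direction you restrict a minimiser of $h_N(\Omega)$ to $\Omega$, use the pointwise inequality $\widetilde\mu(\partial_{\widetilde\Omega}S)\le\mu(\partial_\Omega S)$ together with the minimality of $h_{\widetilde\Omega}$ to force $\eta(\overline P_S)=\zeta(P_{S\cap\Omega})$, and then invoke the side-specific equality characterisation --- exactly the paper's argument. The only difference is that you write out the ``if'' direction in full, via the explicit boundary assignment $S=A\cup\{z\in\delta\Omega: m_B(z)=0\}$ (which is needed precisely to bridge the symmetric condition in (2) and the side-specific equality criterion), whereas the paper dismisses this part as easy; your construction is the intended one and correctly completes it.
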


\begin{proof}
The if part is easy, we only need to show the only if part.

Choose $\overline{P}_S\in\overline{\mathscr{P}}$, then $P_{S\cap\Omega}:=\overline{P}_S\cap\Omega$ is a partition of $\Omega$ and we have
$$h_{\widetilde{\Omega}}=h_N(\Omega)=\eta(\overline{P}_S)\geq \zeta(P_{S\cap\Omega}).$$
The above inequality is obtained by the same process as in the proof of the lower bound estimate in Theorem \ref{thm:main2}. By the minimum of $h_{\widetilde{\Omega}}$, we have $h_N(\Omega)=\zeta(P_{S\cap\Omega})$, i.e. $\widetilde{\mu}(\partial_{\widetilde{\Omega}}S)=\mu(\partial_{\Omega}S)$. Hence $m_A(z)=0$, $\forall z\in S^{\vee}\cap\delta\Omega$ and  $ m_B(z)=0$, $\forall z\in S\cap\delta\Omega.$
\end{proof}
From the proof of Proposition \ref{firstequality}, we easily have
\begin{coro}
If $h_{\widetilde{\Omega}}=h_N(\Omega)$, then $\overline{\mathscr{P}}\cap\Omega\subseteq\mathscr{P}$.
\end{coro}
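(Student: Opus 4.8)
The plan is to extract the inclusion directly from the two-sided squeeze that is already implicit in the proof of Proposition~\ref{firstequality}. Concretely, I would take an arbitrary element of $\overline{\mathscr{P}}\cap\Omega$; by definition it has the form $P_{S\cap\Omega}=\overline{P}_S\cap\Omega$ for some minimizing partition $\overline{P}_S\in\overline{\mathscr{P}}$, and the goal is to show $P_{S\cap\Omega}\in\mathscr{P}$, that is, $\zeta(P_{S\cap\Omega})=h_{\widetilde{\Omega}}$.

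First I would record that, since $\overline{P}_S\in\overline{\mathscr{P}}$, we have $\eta(\overline{P}_S)=h_N(\Omega)$. Next I would invoke the inequality underlying the lower bound in Theorem~\ref{thm:main2}, namely $\widetilde{\mu}(\partial_{\widetilde\Omega}S)\leq\mu(\partial_{\Omega}S)$. Because the denominators of $\eta(\overline{P}_S)$ and $\zeta(P_{S\cap\Omega})$ are literally the same quantity $\min\{m(S\cap\Omega),m(S^{\vee}\cap\Omega)\}$ (using $\widetilde m=m$ on $\Omega$ from Proposition~\ref{prop:degree}), this numerator inequality upgrades to $\eta(\overline{P}_S)\geq\zeta(P_{S\cap\Omega})$. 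On the other hand, $\zeta(P_{S\cap\Omega})\geq h_{\widetilde{\Omega}}$ holds trivially, since $h_{\widetilde{\Omega}}$ is the infimum of $\zeta$ over all partitions of $\Omega$.

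Chaining these relations together with the hypothesis $h_N(\Omega)=h_{\widetilde{\Omega}}$ yields
$$h_{\widetilde{\Omega}}=h_N(\Omega)=\eta(\overline{P}_S)\geq\zeta(P_{S\cap\Omega})\geq h_{\widetilde{\Omega}},$$
which forces every inequality to be an equality; in particular $\zeta(P_{S\cap\Omega})=h_{\widetilde{\Omega}}$, so $P_{S\cap\Omega}\in\mathscr{P}$. As $\overline{P}_S$ was an arbitrary minimizer in $\overline{\mathscr{P}}$, this establishes $\overline{\mathscr{P}}\cap\Omega\subseteq\mathscr{P}$. I do not anticipate any genuine obstacle: the argument is purely a squeeze, and the only point requiring a moment's care is verifying that $\eta$ and $\zeta$ share the same denominator on the induced partition, which is immediate from the definitions together with Proposition~\ref{prop:degree}.
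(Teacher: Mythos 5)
Your proof is correct and takes essentially the same route as the paper: the corollary is read off from the proof of Proposition~\ref{firstequality}, where the identical squeeze $h_{\widetilde{\Omega}}=h_N(\Omega)=\eta(\overline{P}_S)\geq\zeta(P_{S\cap\Omega})\geq h_{\widetilde{\Omega}}$ (with the middle inequality coming from $\widetilde{\mu}(\partial_{\widetilde{\Omega}}S)\leq\mu(\partial_{\Omega}S)$ in the lower bound proof of Theorem~\ref{thm:main2}) forces $\zeta(P_{S\cap\Omega})=h_{\widetilde{\Omega}}$ for every $\overline{P}_S\in\overline{\mathscr{P}}$.
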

From the following example, one can see that even for the equality case, the property $\mathscr{P}\subseteq\overline{\mathscr{P}}\cap\Omega$ does not always hold.
\begin{example}
If we set $a=2+\sqrt{6}$ in the graph as shown in Figure \ref{figure6}, then we have $h_{\widetilde{\Omega}}=h_N(\Omega)=\frac{1}{3}$. Moreover, we have $\overline{\mathscr{P}}=\{\{\{v_1,v_4\},\{v_2,v_3,v_5\}\}\}$ and $\mathscr{P}=\{\{\{v_1,v_2\},\{v_3\}\},\{\{v_1\},\{v_2,v_3\}\}\}$, one can see that $\mathscr{P}\nsubseteq\overline{\mathscr{P}}\cap\Omega$.
\end{example}

Case 2: $h_{N}(\Omega)\leq 2h_{\widetilde{\Omega}}.$

Choose $P_{A}\in\mathscr{P}$. From the proof of Theorem~\ref{thm:main2}, we have a partition of $\overline\Omega$ denoted by $\overline{P}_{S}=\{S,S^{\vee}\}$ induced from $P_{A}$, where $S\cap\Omega=A$ and $\eta(\overline{P}_S)=2h_{\widetilde{\Omega}}$ if and only if

(1) $E(A,B)=\emptyset$ and (2) For any $z\in\delta\Omega$ one of the following holds:

\quad (a) $m_A(z)=0$,

\quad (b) $m_B(z)=0$,

\quad (c) $m_A(z)=m_B(z)$.
\begin{prop}\label{secondequality}
$h_{N}(\Omega)= 2h_{\widetilde{\Omega}}$ if and only if the following two properties hold

(1) $\mathscr{P}\cap(\overline{\mathscr{P}}\cap\Omega)\neq\emptyset$;

(2) For any $ P_{A}\in\mathscr{P}\cap(\overline{\mathscr{P}}\cap\Omega)$, $E(A,B)=\emptyset$ and for any $z\in\delta\Omega$ one of $m_A(z)=0,$ $m_B(z)=0,$ $m_A(z)=m_B(z)$ holds.
\end{prop}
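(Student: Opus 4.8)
The plan is to run everything through the construction $P_A\mapsto\overline{P}_S$ from the proof of Theorem~\ref{thm:main2} (for which $S\cap\Omega=A$) together with the equality criterion recalled just above: for $P_A\in\mathscr{P}$ the induced partition satisfies $\eta(\overline{P}_S)=2h_{\widetilde{\Omega}}$ if and only if $E(A,B)=\emptyset$ and every $z\in\delta\Omega$ satisfies one of $m_A(z)=0$, $m_B(z)=0$, $m_A(z)=m_B(z)$; I will call these the \emph{edge-vertex conditions}. I shall freely use that both constants are attained on a finite graph, so $\eta(\overline{P}_T)\geq h_N(\Omega)$ for every partition $\overline{P}_T$, together with the bound $\eta(\overline{P}_S)\leq2\zeta(P_A)$ established for each induced partition inside the proof of Theorem~\ref{thm:main2}.

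For the only-if direction, I would fix an arbitrary $P_A\in\mathscr{P}$ and let $\overline{P}_S$ be its induced partition. Then $h_N(\Omega)\leq\eta(\overline{P}_S)\leq2\zeta(P_A)=2h_{\widetilde{\Omega}}$, and under the hypothesis $h_N(\Omega)=2h_{\widetilde{\Omega}}$ these inequalities collapse to $\eta(\overline{P}_S)=h_N(\Omega)=2h_{\widetilde{\Omega}}$. The first equality forces $\overline{P}_S\in\overline{\mathscr{P}}$, hence $P_A=\overline{P}_S\cap\Omega\in\overline{\mathscr{P}}\cap\Omega$; since $P_A\in\mathscr{P}$ was arbitrary this gives $\mathscr{P}\subseteq\overline{\mathscr{P}}\cap\Omega$, so $\mathscr{P}\cap(\overline{\mathscr{P}}\cap\Omega)=\mathscr{P}\neq\emptyset$ and property~(1) holds. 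The second equality $\eta(\overline{P}_S)=2h_{\widetilde{\Omega}}$ together with the equality criterion forces the edge-vertex conditions for $P_A$; as this holds for every $P_A\in\mathscr{P}$, in particular for every member of $\mathscr{P}\cap(\overline{\mathscr{P}}\cap\Omega)$, property~(2) holds.

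For the if-direction, I would assume (1) and (2) and choose $P_A\in\mathscr{P}\cap(\overline{\mathscr{P}}\cap\Omega)$ using~(1). Since $P_A\in\mathscr{P}$ and the edge-vertex conditions hold for $P_A$ by~(2), the equality criterion gives $\eta(\overline{P}_S)=2h_{\widetilde{\Omega}}$ for the induced partition; in particular $h_N(\Omega)\leq2h_{\widetilde{\Omega}}$, which is anyway Theorem~\ref{thm:main2}. For the reverse inequality I would first observe that the induced $\overline{P}_S$ minimizes $\mu(\partial_{\Omega}\,\cdot\,)$ among all partitions restricting to $P_A$ on $\Omega$: for any partition $\overline{P}_T=\{T,T^{\vee}\}$ with $\overline{P}_T\cap\Omega=P_A$ one has $\mu(\partial_{\Omega}T)=\sum_{\{x,y\}\in E(A,B)}\mu_{xy}+\sum_{z\in\delta\Omega\cap T^{\vee}}m_A(z)+\sum_{z\in\delta\Omega\cap T}m_B(z)\geq\sum_{\{x,y\}\in E(A,B)}\mu_{xy}+\sum_{z\in\delta\Omega}\min\{m_A(z),m_B(z)\}$, and the construction attains this lower bound by sending each boundary vertex to the cheaper side; since the denominator $\min\{m(A),m(B)\}$ depends only on $P_A$, it follows that $\eta(\overline{P}_S)\leq\eta(\overline{P}_T)$ whenever $\overline{P}_T\cap\Omega=P_A$. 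Now $P_A\in\overline{\mathscr{P}}\cap\Omega$ supplies a minimizer $\overline{P}_{S'}\in\overline{\mathscr{P}}$ with $\overline{P}_{S'}\cap\Omega=P_A$ and $\eta(\overline{P}_{S'})=h_N(\Omega)$, so $2h_{\widetilde{\Omega}}=\eta(\overline{P}_S)\leq\eta(\overline{P}_{S'})=h_N(\Omega)$; combined with $h_N(\Omega)\leq2h_{\widetilde{\Omega}}$ this yields $h_N(\Omega)=2h_{\widetilde{\Omega}}$.

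The main obstacle is the reverse inequality $h_N(\Omega)\geq2h_{\widetilde{\Omega}}$ in the if-direction: the edge-vertex conditions pin down only the value of the single induced partition $\overline{P}_S$, so one must certify that this value is actually the global minimum $h_N(\Omega)$. The membership $P_A\in\overline{\mathscr{P}}\cap\Omega$ granted by property~(1) is exactly what produces a genuine $h_N$-minimizer sharing the restriction $P_A$, and the minimality of $\overline{P}_S$ among partitions with that restriction transfers the computed value $2h_{\widetilde{\Omega}}$ to it. The only routine bookkeeping I foresee is checking, boundary vertex by boundary vertex, that the construction realizes the crossing weight $\sum_{z\in\delta\Omega}\min\{m_A(z),m_B(z)\}$.
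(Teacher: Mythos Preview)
Your proposal is correct and follows the paper's approach. The only-if direction is verbatim the paper's argument: pick $P_A\in\mathscr{P}$, sandwich $h_N(\Omega)\leq\eta(\overline{P}_S)\leq 2h_{\widetilde{\Omega}}=h_N(\Omega)$, and read off both $\overline{P}_S\in\overline{\mathscr{P}}$ and the edge--vertex conditions from the resulting equalities. For the if direction the paper simply asserts ``the if part is easy''; your write-up supplies the missing justification, namely that the induced $\overline{P}_S$ minimizes $\mu(\partial_\Omega\,\cdot\,)$ among all partitions of $\overline{\Omega}$ restricting to $P_A$, so that the membership $P_A\in\overline{\mathscr{P}}\cap\Omega$ from~(1) forces $\eta(\overline{P}_S)\leq h_N(\Omega)$ and hence $2h_{\widetilde{\Omega}}\leq h_N(\Omega)$. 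This is the natural way to unpack the paper's ``easy'', and your observation that hypothesis~(1) is precisely what is needed to pass from the local equality $\eta(\overline{P}_S)=2h_{\widetilde{\Omega}}$ to the global statement is exactly right.
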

\begin{proof}
The if part is easy, we only need to show the only if part.

Choose $P_{A}\in\mathscr{P}$. Then we have
$$h_{N}(\Omega)= 2h_{\widetilde{\Omega}}\geq \eta(\overline{P}_S),$$
where $\overline{P}_S$ is a partition of $\overline{\Omega}$ induced by $P_A$ and the above inequality is obtained by the same process as in the proof of the upper bound estimate in Theorem \ref{thm:main2}.

By the minimum of $h_{N}$, we have $2h_{\widetilde{\Omega}}=\eta(\overline{P}_S)$. Hence the proposition follows.
\end{proof}
From the proof of Proposition \ref{secondequality}, one can easily get
\begin{coro}
If $h_{N}(\Omega)= 2h_{\widetilde{\Omega}}$, then $\mathscr{P}\subseteq (\overline{\mathscr{P}}\cap\Omega)$.
\end{coro}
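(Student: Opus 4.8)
The plan is to run the construction from the upper-bound half of the proof of Theorem~\ref{thm:main2} starting from an \emph{arbitrary} optimal partition of $\Omega$, and then to use the hypothesis $h_N(\Omega)=2h_{\widetilde\Omega}$ to force the induced partition of $\overline\Omega$ to be optimal for $h_N(\Omega)$. Concretely, I would fix any $P_A=\{A,B\}\in\mathscr{P}$, so that $\zeta(P_A)=h_{\widetilde\Omega}$. Following the recipe in that proof, with $K=\{z\in\delta\Omega:m_A(z)\neq 0,\ m_B(z)\neq 0\}$, set $S:=A\cup\{z\in\delta\Omega:m_B(z)=0\}\cup\{z\in K:m_A(z)>m_B(z)\}\cup C$ for some admissible $C\subseteq\{z\in K:m_A(z)=m_B(z)\}$, and let $\overline{P}_S=\{S,S^\vee\}$. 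By construction $S\cap\Omega=A$ and $S^\vee\cap\Omega=B$, so $\overline{P}_S\cap\Omega=P_A$, and in particular $\min\{m(S\cap\Omega),m(S^\vee\cap\Omega)\}=\min\{m(A),m(B)\}$.

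Next I would invoke the chain of inequalities already established in the proof of Theorem~\ref{thm:main2}, namely $\mu(\partial_\Omega S)\le 2\sum_{x\in A}\sum_{y\in B}\widetilde\mu_{xy}=2\,\widetilde\mu(\partial_{\widetilde\Omega}A)$. Dividing by the common minimum identified above yields
\[
\eta(\overline{P}_S)=\frac{\mu(\partial_\Omega S)}{\min\{m(A),m(B)\}}\le \frac{2\,\widetilde\mu(\partial_{\widetilde\Omega}A)}{\min\{m(A),m(B)\}}=2\,\zeta(P_A)=2h_{\widetilde\Omega}.
\]
On the other hand, $\overline{P}_S$ is one particular partition of $\overline\Omega$, so the defining relation $h_N(\Omega)=\inf_{\overline{P}}\eta(\overline{P})$ always gives $\eta(\overline{P}_S)\ge h_N(\Omega)$. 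Feeding in the hypothesis $h_N(\Omega)=2h_{\widetilde\Omega}$ squeezes the two bounds together, $h_N(\Omega)\le\eta(\overline{P}_S)\le 2h_{\widetilde\Omega}=h_N(\Omega)$, whence $\eta(\overline{P}_S)=h_N(\Omega)$.

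This equality is precisely the statement that $\overline{P}_S$ attains the infimum defining $h_N(\Omega)$, i.e. $\overline{P}_S\in\overline{\mathscr{P}}$. Consequently $P_A=\overline{P}_S\cap\Omega\in\overline{\mathscr{P}}\cap\Omega$. Since $P_A\in\mathscr{P}$ was arbitrary, this proves $\mathscr{P}\subseteq\overline{\mathscr{P}}\cap\Omega$, as asserted.

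I do not expect a serious obstacle here; the content is a squeezing argument, consistent with the corollary being flagged as following ``from the proof'' of Proposition~\ref{secondequality}. The only points needing care are bookkeeping: one must check that the induced partition genuinely restricts to $P_A$ on $\Omega$ (so that the two minima coincide and the identification $\overline{P}_S\cap\Omega=P_A$ is literal), and one should note that the construction of $S$ leaves a free choice of $C$, so the argument produces \emph{some} optimal $\overline{P}_S$ with $\overline{P}_S\cap\Omega=P_A$ rather than a unique one --- which is all that membership in $\overline{\mathscr{P}}\cap\Omega$ requires.
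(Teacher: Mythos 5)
Your proposal is correct and follows essentially the same route as the paper: the paper's own justification (via the proof of Proposition~\ref{secondequality}) also takes an arbitrary $P_A\in\mathscr{P}$, forms the induced partition $\overline{P}_S$ with $S\cap\Omega=A$ as in the upper-bound half of Theorem~\ref{thm:main2}, and squeezes $h_N(\Omega)\le\eta(\overline{P}_S)\le 2h_{\widetilde{\Omega}}=h_N(\Omega)$ to conclude $\overline{P}_S\in\overline{\mathscr{P}}$, hence $P_A\in\overline{\mathscr{P}}\cap\Omega$. Your added bookkeeping (checking $\overline{P}_S\cap\Omega=P_A$ literally, and noting the free choice of $C$ only needs to yield \emph{some} optimal $\overline{P}_S$) is exactly the right care and matches the paper's intent.
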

\begin{figure}[!h]
\includegraphics[height=5cm,width=13cm]{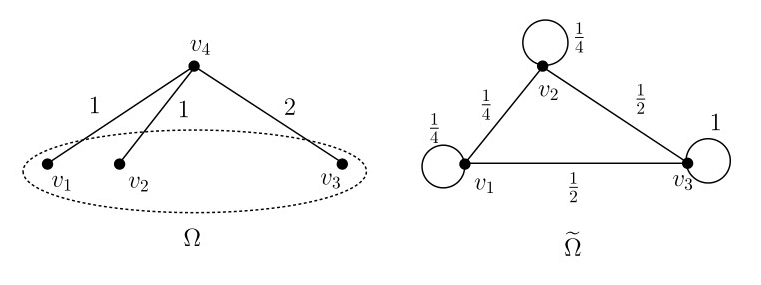}
\caption{}\label{figure4}
\end{figure}
We can see that even for the equality case, the property $\overline{\mathscr{P}}\cap\Omega\subseteq\mathscr{P}$ doesn't always hold, see the following example.

\begin{example}
Let $\Omega=\{v_1,v_2,v_3\}$. The graph and edge weights are as shown in Figure \ref{figure4}.
Then we have $h_{\widetilde{\Omega}}=\frac12$ and $h_N(\Omega)=1.$ If we choose $\overline{\mathscr{P}}\ni \overline{P}_{S}$, where $S=\{v_1\}$. Then $P_{S\cap\Omega}$ is a partition of $\Omega$, but $\zeta(P_{S\cap\Omega})=\frac34$.
\end{example}

\section{neumann cheeger estimate}
In this section, we prove the main result, i.e. Theorem \ref{NeumannCheegerestimates}, following the arguments in \cite[Lemma 2.1, Theorem 2.3]{Chung97}.
\begin{proof}[Proof of Theorem \ref{NeumannCheegerestimates}]
For the upper bound estimate $\lambda_{1,N}(\Omega)\leq 2h_N(\Omega).$
We choose $S\subset\overline{\Omega}$ achieves $h_N(\Omega).$ Set $f$ as
\[
f(x)=\left\{
       \begin{array}{ll}
        \frac{1}{m(S\cap\Omega)},& x\in S, \\
        -\frac{1}{m(S^{\vee}\cap\Omega)},&x\in S^{\vee}.
               \end{array}
     \right.
\]
By substituting $f(x)$ into \eqref{Reighley quotient}, we have
\begin{eqnarray*}
\lambda_{1,N}(\Omega)&\leq& (\frac{1}{m(S\cap\Omega)}+\frac{1}{m(S^{\vee}\cap\Omega)})\mu(\partial_{\Omega} S)\\
&\leq&\frac{2\mu(\partial_{\Omega} S)}{\min\{m(S\cap\Omega),m(S^{\vee}\cap\Omega)\}}\\
&=&2h_N(\Omega).
\end{eqnarray*}

Now we turn to the lower bound estimate, i.e. $2-\sqrt{4-h_N^2(\Omega)}\leq\lambda_{1,N}(\Omega).$

Let $0\not\equiv u\in\R^{\OM}$ be the first eigenfunction associated with the first eigenvalue $\lambda_{1,N}(\Omega),$ i.e. $u$ satisfied:
\begin{equation}\label{eq:eigenequation}
 -\Delta u(x)=\lambda_{1,N}(\Omega) u(x), x\in \Omega.
\end{equation}

By $\langle u,1\rangle_{\Omega}=0,$ $\{x\in\Omega| u(x)>0\}$ and $\{x\in\Omega| u<0\}$ are nonempty. Without loss of generality, we may assume that \begin{equation}\label{eq:eq1}m(\{x\in\Omega| u(x)>0\})\leq m(\{x\in\Omega| u<0\}),\end{equation} otherwise consider $-u$.

  For the eigenfunction $u,$ we set $\OM_+=\{x\in\OM| u(x)\geq 0\},$ $\OM_-=\{x\in\OM| u(x)< 0\},$ $\Omega_+=\OM_+\cap\Omega$ and $\Omega_-=\OM_-\cap\Omega.$

  Multiplying the both sides of the equation in \eqref{eq:eigenequation} by $u(x),$ for $x\in\Omega_+,$ and summing over $x\in\Omega_+$ w.r.t. the measure $m,$ we have \begin{equation*}\sum_{x\in\Omega_+}\sum_{y\in\OM}\mu_{xy}(u(x)-u(y))u(x)=\lambda_{1,N}(\Omega)\sum_{x\in\Omega_+}u^2(x)m_x. \end{equation*} By the Neumann boundary condition of $u$ and note that here we consider graph $(\overline\Omega,\mu)$ with edges in $E_{\Omega},$
  $$\sum_{x\in\OM_+\setminus\Omega_+}\sum_{y\in\OM}\mu_{xy}(u(x)-u(y))u(x)=0.$$
  Adding up the last two equations, we have
 $$\sum_{x\in\OM_+}\sum_{y\in\OM}\mu_{xy}(u(x)-u(y))u(x)=\lambda_{1,N}(\Omega)\sum_{x\in\Omega_+}u^2(x)m_x.$$
We set
 \[
v(x)=u^+(x)=\left\{
       \begin{array}{ll}
        u(x),& x\in \OM_+, \\
        0,&x\in \OM_-.
               \end{array}
     \right.
\]
Then
\begin{eqnarray*}
&&\sum_{x\in\OM_+}\sum_{y\in\OM}\mu_{xy}(u(x)-u(y))u(x)\\
&=&\sum_{x,y\in\OM_+}\mu_{xy}(u(x)-u(y))u(x)+\sum_{x\in\OM_+}\sum_{y\in\OM_-}\mu_{xy}(u(x)-u(y))u(x)\\
&\geq&\sum_{x,y\in\OM_+}\mu_{xy}(v(x)-v(y))v(x)+\sum_{x\in\OM_+}\sum_{y\in\OM_-}\mu_{xy}(v(x)-v(y))^2\\
&=&\frac{1}{2}\sum_{x,y\in\OM_+}\mu_{xy}(v(x)-v(y))^2+\sum_{x\in\OM_+}\sum_{y\in\OM_-}\mu_{xy}(v(x)-v(y))^2
\end{eqnarray*}

Hence
\begin{eqnarray}\label{eq:eq3}
\lambda_{1,N}=\frac{D_\Omega(v)}{\sum_{x\in\Omega}v^2(x)m_x}=:W
\end{eqnarray}

To estimate $W,$
\begin{eqnarray*}
  W&=&\frac{\sum_{e=\{x,y\}\in E_{\Omega}}\mu_{xy}(v(x)-v(y))^2
  \sum_{e=\{x,y\}\in E_{\Omega}}\mu_{xy}(v(x)+v(y))^2}{\sum_{x\in\Omega}v^2(x)m_x
  \sum_{e=\{x,y\}\in E_{\Omega}}\mu_{xy}(v(x)+v(y))^2}=\frac{I}{II}.\\
\end{eqnarray*} For the term $I,$ the H\"older's inequality yields
\begin{eqnarray}\label{eq:eq6}
  I^{\frac{1}{2}}\geq \sum_{\substack{e=\{x,y\}\in E_{\Omega}\\v(x)>v(y)}}\mu_{xy}(v(x)^2-v(y)^2)
\end{eqnarray} It suffices to estimate the term in the above bracket. Let $\chi_{[a,b)}(t)$ be the characteristic function on interval $[a,b)$.
For any $t\geq0,$ set $P_t:=\{x\in\OM| v(x)>t\}.$
Using Fubini's theorem to change the order of integrals and summations, we have
\begin{eqnarray}
  &&\sum_{\substack{e=\{x,y\}\in E_{\Omega}\\v(x)>v(y)}}\mu_{xy}(v(x)^2-v(y)^2)=2\sum_{\substack{e=\{x,y\}
  \in E_{\Omega}\\v(x)>v(y)}}\mu_{xy}\int_{v(y)}^{v(x)}tdt\nonumber\\
&=&2\sum_{\substack{e=\{x,y\}\in E_{\Omega}\\v(x)>v(y)}}\mu_{xy}\int_{0}^{\infty}t\cdot\chi_{[v(y),v(x))}(t) dt\nonumber\\
&=&2\int_0^\infty tdt\sum_{\substack{e=\{x,y\}\in E_{\Omega}\\v(x)>t\geq v(y)}}\mu_{xy}=2\int_0^\infty tdt\mu(\partial_{\Omega}P_t)\nonumber\\
&\geq&2h_N(\Omega)\int_0^\infty tdtm(P_t\cap\Omega)=2h_N(\Omega)\int_0^\infty tdt\sum_{\substack{x\in\Omega\\v(x)>t}}m_x\label{eq:eq2}\\
&=&2h_N(\Omega)\sum_{x\in\Omega}m_x\int_0^{v(x)}tdt=h_N(\Omega)\sum_{x\in\Omega}v^2(x)m_x,\label{eq:eq5}\end{eqnarray}
where we have used the Cheeger constant $h_N(\Omega)$ in \eqref{eq:eq2} since $m(P_t\cap\Omega)\leq m(P_t^{\vee}\cap\Omega)$ which follows from our assumption \eqref{eq:eq1}.

To estimate the term $II,$ by \eqref{eq:eq3} we have
\begin{eqnarray}\label{eq:eq4}
  &&\sum_{e=\{x,y\}\in E_{\Omega}}\mu_{xy}(v(x)+v(y))^2\nonumber\\&=&2\sum_{e=\{x,y\}\in E_{\Omega}}\mu_{xy}(v(x)^2+v(y)^2)-
  \sum_{e=\{x,y\}\in E_{\Omega}}\mu_{xy}(v(x)-v(y))^2\nonumber\\
  &=&\sum_{x,y\in\Omega}\mu_{xy}(v(x)^2+v(y)^2)+2\sum_{x\in\Omega,z\in\delta\Omega}\mu_{xz}(v(x)^2+v(z)^2)
  -W\cdot\sum_{x\in\Omega_+}v^2(x)m_x
\nonumber\\
&=&2\sum_{x\in\Omega,y\in\OM}\mu_{xy}v(x)^2+2\sum_{x\in\Omega,z\in\delta\Omega}\mu_{xz}v(z)^2-W\cdot\sum_{x\in\Omega}v^2(x)m_x\end{eqnarray}
Note that the Neumann condition for $u$ yields that for any $z\in\delta\Omega,$
$u(z)=\frac{1}{m'_z}\sum_{w\in\Omega}\mu_{zw}u(w).$ This implies that
$$v(z)\leq \frac{1}{m'_z}\sum_{w\in\Omega}\mu_{zw}v(w).$$ Hence by H\"older's inequality,
\begin{eqnarray*}\sum_{x\in\Omega,z\in\delta\Omega}\mu_{xz}v(z)^2&\leq&\sum_{x\in\Omega,z\in\delta\Omega}\frac{\mu_{xz}}{m'_z}\sum_{w\in\Omega}\mu_{zw}v(w)^2\\
&\leq&\sum_{w\in\Omega,z\in\delta\Omega}\mu_{zw}v(w)^2\\
&\leq&\sum_{w\in\Omega}v(w)^2m_w.
\end{eqnarray*} Noting that \eqref{eq:eq4}, we have
\begin{equation*}\sum_{e=\{x,y\}\in E_{\Omega}}\mu_{xy}(v(x)+v(y))^2\leq 4\sum_{x\in\Omega}v(x)^2m_x-W\cdot\sum_{x\in\Omega}v^2(x)m_x.\end{equation*}
Combining this with the estimates in \eqref{eq:eq6} and \eqref{eq:eq5}, we get
\begin{eqnarray*}
  W= \frac{I}{II}\geq h_N(\Omega)^2\frac{1}{4-W}.
\end{eqnarray*}
This yields that $$\lambda_{1,N}(\Omega)\geq W\geq 2-\sqrt{4-h_N(\Omega)^2}.$$
\end{proof}

At last, we give another example to compare the lower estimate in \eqref{eq:main estimate} and \eqref{eq:trivial estimate}.
\begin{figure}[!h]
\includegraphics[height=5cm,width=10cm]{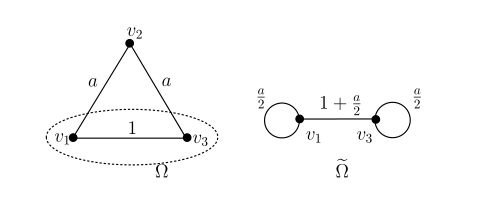}
\caption{}\label{figure5}
\end{figure}
\begin{example}
For any $a>0,$ let $G_a=(\{v_1,v_2,v_3\},\mu_a)$ be a triangle with edge weights $\mu_{a}(\{v_1,v_2\})=\mu_{a}(\{v_2,v_3\})=a$ and $\mu_{a}(\{v_1,v_3\})=1$ as shown in Figure \ref{figure5}. Let $\Omega=\{v_1,v_3\}.$ Suppose $a\gg 1$, then $h_{N}(\Omega)=1$ and $h_{\widetilde\Omega}=\frac{1+\frac{a}{2}}{1+a}$. Hence $2-\sqrt{4-h_N(\Omega)^2}=2-\sqrt{3}$ and $1-\sqrt{1-h_{\widetilde\Omega}^2}\rightarrow\frac{2-\sqrt{3}}{2}, $ as $a\rightarrow+\infty$. By calculation, our lower bound estimate in \eqref{eq:main estimate} is better than \eqref{eq:trivial estimate} when $a\geq1.76$.
\end{example}

\textbf{Acknowledgements.} B.H. is supported by NSFC, grant no. 11401106.

\bibliography{NeumannCondition}
\bibliographystyle{alpha}

\end{document}